\theoremstyle{plain}
\newtheorem{theorem}{\indent\bf Theorem}[section]
\newtheorem{corollary}[theorem]{\indent\bf Corollary}
\theoremstyle{definition}
\newtheorem{definition}[theorem]{\indent\bf Definition}
\newtheorem{remark}[theorem]{\indent\bf Remark}
\begin{document}
\title[Irreducibility criteria]{Apollonius circles and the number of irreducible factors of
polynomials}    
\author[A.I. Bonciocat]{Anca Iuliana Bonciocat}
\address{Simion Stoilow Institute of Mathematics of the Romanian 
Academy, Research Unit nr. 3,
P.O. Box 1-764, Bucharest 014700, Romania}
\email{Anca.Bonciocat@imar.ro} 
\author[N.C. Bonciocat]{Nicolae Ciprian Bonciocat}
\address{Simion Stoilow Institute of Mathematics of the Romanian 
Academy, Research Unit nr. 7,
P.O. Box 1-764, Bucharest 014700, Romania}
\email{Nicolae.Bonciocat@imar.ro}
\author[Y. Bugeaud]{Yann Bugeaud}
\address{Universit\'{e} de Strasbourg, Math\'{e}matiques, 7, 
rue Ren\'{e} Descartes, 67084 Strasbourg Cedex, France}
\email{yann.bugeaud@math.unistra.fr}
\author[M. Cipu]{Mihai Cipu}
\address{Simion Stoilow Institute of Mathematics of the Romanian 
Academy, Research Unit nr. 7,
P.O. Box 1-764, Bucharest 014700, Romania}
\email{Mihai.Cipu@imar.ro}
\author[M. Mignotte]{Maurice Mignotte}
\address{Universit\'{e} de Strasbourg, Math\'{e}matiques, 7, 
rue Ren\'{e} Descartes, 67084 Strasbourg Cedex, France}
\email{maurice.mignotte@math.unistra.fr}

\dedicatory{Dedicated to Professors Toma Albu and Constantin N\u ast\u asescu on the occasion of their 80th birthdays}

\keywords{irreducible polynomials, prime numbers}
\subjclass[2010]{Primary 11R09; Secondary 11C08.}

\begin{abstract}
We provide upper bounds for the sum of the multiplicities of the non-constant irreducible factors that appear in the canonical decomposition of a polynomial $f(X)\in\mathbb{Z}[X]$, in case all the roots of $f$ lie inside an Apollonius circle associated to two points on the real axis with integer abscissae $a$ and $b$, with ratio of the distances to these points depending on the admissible divisors of $f(a)$ and $f(b)$.
In particular, we obtain such upper bounds for the case where $f(a)$ and $f(b)$ have 
few prime factors, and $f$ is an Enestr\"om-Kakeya polynomial, or a Littlewood polynomial, or has 
a large leading coefficient. Similar results are also obtained for multivariate polynomials over arbitrary fields, in a non-Archimedean setting.
\end{abstract}
\maketitle

\section{Introduction} \label{se1}

The prime factorization of the values that an integer polynomial $f(X)$ takes at some specified integral arguments gives useful information on the canonical decomposition of $f$. Many of the classical or more recent irreducibility criteria make use of such information, combined with information on the location of the roots of $f$. One may find such classical results in the works of St\"ackel \cite{Stackel}, Weisner \cite{Weisner}, Ore \cite{Ore} and Dorwart \cite{Dorwart}. For more recent results and some elegant connections between prime numbers and irreducible polynomials we refer the reader to Ram Murty \cite{RamMurty}, Girstmair \cite{Girstmair}, Guersenzvaig \cite{Guersenzvaig1}, and Bodin, D\`ebes and Najib \cite{BDN3}, for instance. 
Some particularly elegant irreducibility criteria write prime numbers or some classes of composite numbers in the number system with base $B$, say, and then replace the base by an indeterminate to produce irreducible polynomials. Here we mention Cohn's irreducibility criterion \cite{PolyaSzego} that uses prime numbers written in the decimal system, and its generalization by Brillhart, Filaseta and Odlyzko \cite{Brillhart} to an
arbitrary base, as well as further generalizations by Filaseta \cite{Filaseta1}, \cite{Filaseta2}, and by Cole, Dunn and Filaseta \cite{CDF}. Another way to produce irreducible polynomials $f$ is to write prime numbers or prime powers as sums of integers of arbitrary sign, one of these integers having a sufficiently large absolute value, and to use these integers as coefficients of $f$ \cite{Bonciocat1}, \cite{Bonciocat2}.
 
In \cite{Apollonius} the irreducibility of an integer polynomial $f$ was studied by combining information on the {\it admissible divisors} of $f(a)$ and $f(b)$ for two integers $a$ and $b$, with information on the location of the roots of $f$. We recall here the definition of admissible divisors, that will be also required throughout this paper.

\begin{definition}\label{admissible}
Let $f$ be a non-constant polynomial with integer coefficients, and let $a$ be an integer with $f(a)\neq 0$. 
We 
say that an integer $d$ is an {\it admissible divisor} of $f(a)$ if $d\mid f(a)$ and 
\begin{equation}\label{adm2}
\gcd \left( d,\frac{f(a)}{d}\right) \mid \gcd (f(a),f'(a)),
\end{equation}
and we shall denote by $\mathcal{D}_{ad}(f(a))$ the set of all admissible divisors of $f(a)$. 
We say that an integer $d$ is a {\it unitary divisor} of $f(a)$ if $d\mid f(a)$ and $d$ is coprime with $f(a)/d$. 
We denote by $\mathcal{D}_{u}(f(a))$ the set of unitary divisors of $f(a)$.  
\end{definition}
We note that this definition was motivated by the fact that if a polynomial $f(X)\in \mathbb{Z}[X]$ factors as 
$f(X)=g(X)h(X)$ with $g,h$ non-constant polynomials in $\mathbb{Z}[X]$, then given an integer $a$ with $f(a)\neq 0$, the integers $g(a)$ and $h(a)$ are divisors of $f(a)$ which must also satisfy the equality $f'(a)=g'(a)h(a)+g(a)h'(a)$. This implies that the greatest common divisor of $g(a)$ and $\frac{f(a)}{g(a)}$ must divide both $f(a)$ and $f'(a)$.
We also note that if $f(a)$ and $f'(a)$ are coprime, then $\mathcal{D}_{ad}(f(a))$ 
reduces to the set $\mathcal{D}_{u}(f(a))$.

As seen in \cite{Apollonius}, one can connect the study of the irreducibility of $f$ with the location of the roots of $f$ inside an Apollonius circle associated 
to the points on the real axis with integer abscissae $a$ and $b$, 
and ratio of the distances to these two points expressed only in terms of some admissible divisors of $f(a)$ and $f(b)$. We recall here the famous definition of a circle given by Apollonius, as the set of points $P$ in the plane that have a given ratio $r$ of distances to two fixed points $A$ and $B$ (see  Figure 1), which may degenerate to a point (for $r\to 0$ or $r\to \infty$) or to a line (for $r\to 1$). 
\begin{center}
\hspace{2.6cm}
\setlength{\unitlength}{8mm}
\begin{picture}(12,5.5)
\linethickness{0.15mm}

\put(-1,2){\vector(1,0){9}}
\put(0,0.25){\vector(0,1){4.75}}

\thicklines
\linethickness{0.1mm}
\put(4,1.3){\line(0,1){0.1}}
\put(4,1.5){\line(0,1){0.1}}
\put(4,1.7){\line(0,1){0.1}}
\put(4,1.9){\line(0,1){0.1}}
\put(4,2.1){\line(0,1){0.1}}
\put(4,2.3){\line(0,1){0.1}}
\put(4,2.5){\line(0,1){0.1}}
\put(4,2.7){\line(0,1){0.1}}
\put(4,2.9){\line(0,1){0.1}}
\put(4,3.1){\line(0,1){0.1}}
\put(4,3.3){\line(0,1){0.1}}
\put(4,3.5){\line(0,1){0.1}}
\put(4,3.7){\line(0,1){0.1}}

{\tiny 
\put(3.25,2.75){$P$}
\put(1.65,3.15){$r>1$}
\put(5.55,3.15){$r<1$}
\put(3.55,4.15){$r=1$}
\put(7.25,3.75){$d(P,B)=r\cdot d(P,A)$}

\put(2.35,1.55){$(a,0)$}
\put(4.75,1.55){$(b,0)$}

\put(2.7,2.2){$A$}
\put(4.97,2.2){$B$}

\put(3.35,0.8){$x=\frac{a+b}{2}$}

}

\put(4,2){\circle{0.08}}

\put(3,2){\circle{0.08}}
\put(3,2){\circle{0.12}}

\put(4.95,2){\circle{0.08}}
\put(4.95,2){\circle{0.12}}

\put(2.666,2){\circle{1.82}}
\put(5.333,2){\circle{1.82}}

\put(3.36,2.53){\circle{0.08}}
\put(3.36,2.53){\circle{0.12}}

\put(3,2){\line(2,3){0.35}}  

\put(3.36,2.53){\line(3,-1){1.6}}  

{\small 
\put(-5.6,-0.65){{\bf Figure 1.}\ The Apollonius circles associated to a pair of points in the plane}
}
\end{picture}
\end{center}
\bigskip

\medskip

More precisely, given two points $A=(a,0)$ and $B=(b,0)$ 
and $r>0$,  $r\neq 1$,          
the set of points $P=(x,y)$ with $d(P,B)=r\cdot d(P,A)$ is the 
Apollonius circle ${\rm Ap}(a,b,r)$ given by the equation
\begin{equation}\label{Apollonius1}
\left( x-\frac{ar^2-b}{r^2-1}\right)^2+y^2=\left(r\frac{b-a}{r^2-1}\right) ^2,
\end{equation}
which degenerates to the point $(b,0)$ for $r\to 0$, to the point $(a,0)$ for $r\to \infty$, and to the vertical line $x=\frac{a+b}{2}$ for $r\to 1$.

Consider the canonical decomposition $f(X)=const\cdot \prod\limits_{i=1}^{r}f
_{i}(X)^{m_{i}}$ of a polynomial $f\in\mathbb{Z}[X]$, with the $f_{i}$'s irreducible and prime to each other. The aim of this paper is to generalize the results in \cite{Apollonius} by obtaining upper bounds for $m_1+\cdots +m_r$. This will be achieved by adapting the methods in \cite{Apollonius} and considering a sequence of potentially larger Apollonius circles that might contain the roots of $f$, and here too, the admissible divisors of $f(a)$ and $f(b)$ will play a crucial role. For other results that bound the sum of these multiplicities we refer the reader to Guersenzvaig \cite{Guersenzvaig1}, and to \cite{BoncioActaArit}, where some methods of Cavachi, M. V\^aj\^aitu and Zaharescu \cite{CAV}, \cite{CVZ1} to study linear combinations of relatively prime polynomials have been employed.

Throughout the paper, instead of saying that the sum of the multiplicities of the non-constant irreducible factors that appear in the canonical decomposition of $f$ is at most $k$, we will simply say that $f$ is the product of at most $k$ irreducible factors over $\mathbb{Q}$. 

Our first result that establishes a connection between Apollonius circles and the sum of these multiplicities is the following.
\begin{theorem}\label{thm0}
Let $f(X)=a_{0}+a_{1}X+\cdots +a_{n}X^{n}\in \mathbb{Z}[X]$, and assume that for two integers $a,b$ we have $0<|f(a)|<|f(b)|$. Let $k$ be a positive integer, and let
\begin{equation}\label{primulq}
q_k=\max \left\{ \frac{d_2}{d_1}\leq \sqrt[k+1]{\frac{|f(b)|}{|f(a)|}}: 
d_1\in \mathcal{D}_{ad}(f(a)), \ d_2\in \mathcal{D}_{ad}(f(b)) \right\} .
\end{equation}
Then $f$ is the product of at most $k$ irreducible factors over $\mathbb{Q}$ in each one of the following three cases:

\emph{i)} \ \thinspace $q_k>1$ and all the roots of $f$ lie inside the Apollonius  circle ${\rm Ap}(a,b,q_k)$;

\emph{ii)} \thinspace $q_k>1$, all the roots of $f$ lie inside the Apollonius circle ${\rm Ap}(a,b,\sqrt{q_k})$, and $f$ has no rational roots;

\emph{iii)} $q_k=1$ and either $b>a$ and all the roots of $f$ lie in 
the half-plane $x<\frac{a+b}{2}$, or $a>b$ and all the roots of $f$ lie in the half-plane $x>\frac{a+b}{2}$.
\end{theorem}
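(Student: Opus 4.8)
The plan is to argue by contradiction. Suppose $f$ has at least $k+1$ non-constant irreducible factors counted with multiplicity, so that one may write $f(X)=c\,g_1(X)\cdots g_t(X)$ with $c\in\mathbb{Z}\setminus\{0\}$, each $g_i\in\mathbb{Z}[X]$ primitive and irreducible, and $t\ge k+1$. First I would record, for each index $i$, that $|g_i(a)|\in\mathcal{D}_{ad}(f(a))$ and $|g_i(b)|\in\mathcal{D}_{ad}(f(b))$: writing $f=g_i\cdot h_i$ with $h_i=c\prod_{j\ne i}g_j\in\mathbb{Z}[X]$ non-constant and using $f'(a)=g_i'(a)h_i(a)+g_i(a)h_i'(a)$ exactly as in the discussion following Definition \ref{admissible}, the common divisor $\gcd(g_i(a),f(a)/g_i(a))=\gcd(g_i(a),h_i(a))$ divides both $f(a)$ and $f'(a)$, and likewise at $b$. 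Here $f(a)\ne0$ guarantees $g_i(a)\ne0$ and $h_i(a)\ne0$, while $|f(b)|>|f(a)|>0$ gives $f(b)\ne0$; and since $1\in\mathcal{D}_{ad}(f(a))\cap\mathcal{D}_{ad}(f(b))$, the set in \eqref{primulq} is nonempty and $q_k\ge1$, so each case is meaningful.

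The second ingredient is geometric. For a non-constant factor $g$ of $f$, writing $g(X)=c_g\prod_\alpha(X-\alpha)$ over its complex roots $\alpha$, one has $\bigl|g(b)/g(a)\bigr|=\prod_\alpha d(B,\alpha)/d(A,\alpha)$, where $A=(a,0)$, $B=(b,0)$ and $d$ is Euclidean distance. A direct computation from \eqref{Apollonius1} (or from $|b-\alpha|^2-|a-\alpha|^2=(b-a)(a+b-2\Re\alpha)$) shows that for $r>1$ a point lies strictly inside ${\rm Ap}(a,b,r)$ precisely when $d(P,B)>r\cdot d(P,A)$, while for $r=1$ the side of the line $x=\frac{a+b}{2}$ containing $A$ is precisely $\{P:d(P,B)>d(P,A)\}$ (this side is $x<\frac{a+b}{2}$ when $b>a$ and $x>\frac{a+b}{2}$ when $a>b$). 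Consequently, in case \emph{i)} every root $\alpha$ of each $g_i$ satisfies $d(B,\alpha)>q_k\,d(A,\alpha)$, hence $\bigl|g_i(b)/g_i(a)\bigr|>q_k^{\deg g_i}\ge q_k$; in case \emph{iii)} the same reasoning with the degenerate line gives $\bigl|g_i(b)/g_i(a)\bigr|>1=q_k$; and in case \emph{ii)}, the absence of rational roots forces $\deg g_i\ge2$ for all $i$, so from the circle ${\rm Ap}(a,b,\sqrt{q_k})$ we get $\bigl|g_i(b)/g_i(a)\bigr|>(\sqrt{q_k})^{\deg g_i}\ge q_k$.

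Thus in all three cases $\bigl|g_i(b)/g_i(a)\bigr|>q_k$ for every $i$. Now comes the key "trap": the number $\bigl|g_i(b)/g_i(a)\bigr|$ is a ratio $d_2/d_1$ with $d_1=|g_i(a)|\in\mathcal{D}_{ad}(f(a))$ and $d_2=|g_i(b)|\in\mathcal{D}_{ad}(f(b))$, so if it were $\le\sqrt[k+1]{|f(b)|/|f(a)|}$ it would be $\le q_k$ by the maximality defining $q_k$ in \eqref{primulq}, contradicting the previous paragraph. Hence $\bigl|g_i(b)/g_i(a)\bigr|>\sqrt[k+1]{|f(b)|/|f(a)|}$ for every $i=1,\dots,t$, and multiplying these $t\ge k+1$ inequalities, using $\prod_{i=1}^t g_i(b)/g_i(a)=f(b)/f(a)$, yields
\[
\frac{|f(b)|}{|f(a)|}=\prod_{i=1}^{t}\Bigl|\frac{g_i(b)}{g_i(a)}\Bigr|>\left(\frac{|f(b)|}{|f(a)|}\right)^{\frac{t}{k+1}}\ge\frac{|f(b)|}{|f(a)|},
\]
the last step because $t\ge k+1$ and $|f(b)|/|f(a)|>1$. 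This contradiction shows $m_1+\cdots+m_r\le k$, i.e.\ $f$ is the product of at most $k$ irreducible factors over $\mathbb{Q}$.

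I expect the only real obstacle to be bookkeeping rather than conceptual: pinning down the correspondence between "strictly inside ${\rm Ap}(a,b,r)$" and the distance inequality $d(P,B)>r\,d(P,A)$ (including the orientation of the degenerate line in case \emph{iii)} according to the sign of $b-a$), and making sure the admissibility of $|g_i(a)|$ and $|g_i(b)|$ is unaffected by the content $c$ of $f$ or by signs. Once these are settled, the multiplicative estimate is immediate, and the three cases differ only in how the root-location hypothesis is converted into the bound $\bigl|g_i(b)/g_i(a)\bigr|>q_k$.
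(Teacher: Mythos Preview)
Your proof is correct and follows essentially the same approach as the paper: both establish admissibility of $|g_i(a)|$ and $|g_i(b)|$, and both exploit the product formula $|g_i(b)/g_i(a)|=\prod_\alpha |b-\alpha|/|a-\alpha|$ together with the Apollonius distance inequality. The only cosmetic difference is in the final contradiction: the paper uses a pigeonhole to find one factor with $|f_1(b)/f_1(a)|\le q_{s-1}$ and then a second pigeonhole on the roots of $f_1$, whereas you show every ratio exceeds $q_k$, bootstrap via the maximality in \eqref{primulq} to every ratio exceeding $\sqrt[k+1]{|f(b)|/|f(a)|}$, and multiply---this is a slightly cleaner packaging that avoids introducing $q_{s-1}$ and the monotonicity of $k\mapsto q_k$.
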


We mention that by taking $k=1$ in Theorem \ref{thm0}, we recover Theorem 1 in \cite{Apollonius}. Theorem \ref{thm0} can be given in a more explicit form by taking into account the equation of the Apollonius circles given by (\ref{Apollonius1}), as follows.
\begin{theorem}\label{thm0Explicit}
Let $f(X)=a_{0}+a_{1}X+\cdots +a_{n}X^{n}\in \mathbb{Z}[X]$, and assume that for two integers $a,b$ we have $0<|f(a)|<|f(b)|$. Let $k$ be a positive integer and let $q_k$ be given by {\em (\ref{primulq})}. Then $f$ is the product of at most $k$ irreducible factors over $\mathbb{Q}$ in each one of the following three cases:

\emph{i)} \ \thinspace $q_k>1$ and each root $\theta $ of $f$ satisfies $|\theta-\frac{aq_k^2-b}{q_k^2-1}|<q_k\frac{|b-a|}{q_k^2-1}$;

\emph{ii)} \thinspace $q_k>1$, each root $\theta $ of $f$ satisfies $|\theta -\frac{aq_k-b}{q_k-1}|<\sqrt{q_k}\frac{|b-a|}{q_k-1}$, and $f$ has no rational roots;

\emph{iii)} $q_k=1$ and either $b>a$ and all the roots of $f$ lie in 
the half-plane $x<\frac{a+b}{2}$, or $a>b$ and all the roots of $f$ lie in the half-plane $x>\frac{a+b}{2}$.
\end{theorem}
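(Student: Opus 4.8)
The plan is to deduce Theorem~\ref{thm0Explicit} directly from Theorem~\ref{thm0}, since the two statements differ only in that the geometric hypotheses on the location of the roots of $f$ are here spelled out explicitly by means of the equation~(\ref{Apollonius1}) of an Apollonius circle. So the only work is to translate ``a root lies inside ${\rm Ap}(a,b,r)$'' into an inequality.

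First I would fix $r>1$ and unwind what it means for a root $\theta$ of $f$ to lie inside ${\rm Ap}(a,b,r)$. Writing $\theta=x+iy$ and identifying it with the point $(x,y)$ of the plane, equation~(\ref{Apollonius1}) shows that ${\rm Ap}(a,b,r)$ is the circle centred at the real number $\frac{ar^2-b}{r^2-1}$ with radius $r\frac{|b-a|}{r^2-1}$; here $r^2-1>0$, and since the hypothesis $0<|f(a)|<|f(b)|$ forces $a\neq b$, this radius is a genuine positive number. Because the centre lies on the real axis, the Euclidean distance from $\theta$ to the centre equals $\left|\theta-\frac{ar^2-b}{r^2-1}\right|$, so the assertion ``$\theta$ lies strictly inside ${\rm Ap}(a,b,r)$'' is literally the inequality $\left|\theta-\frac{ar^2-b}{r^2-1}\right|<r\frac{|b-a|}{r^2-1}$.

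I would then apply this observation twice. Taking $r=q_k$ converts hypothesis~i) of Theorem~\ref{thm0} into hypothesis~i) of Theorem~\ref{thm0Explicit}. Taking $r=\sqrt{q_k}$, which is legitimate because $q_k>1$ gives $\sqrt{q_k}>1$ and hence $q_k-1>0$, and using $(\sqrt{q_k})^2=q_k$, converts hypothesis~ii) of Theorem~\ref{thm0} into hypothesis~ii) of Theorem~\ref{thm0Explicit}; the extra assumption that $f$ has no rational roots is carried over unchanged. Hypothesis~iii) is identical in the two statements. In each of the three cases the conclusion that $f$ is a product of at most $k$ irreducible factors over $\mathbb{Q}$ is then immediate from Theorem~\ref{thm0}.

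There is essentially no obstacle: the entire argument is a change of wording, and the only points requiring a moment's care are the identification of a complex root with a point of the plane, the positivity of the denominators $q_k^2-1$ and $q_k-1$ (guaranteed by $q_k>1$ in cases i) and ii)), and the non-degeneracy of the radius (guaranteed by $a\neq b$). All the mathematical substance lies in Theorem~\ref{thm0}.
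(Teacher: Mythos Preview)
Your proposal is correct and follows essentially the same approach as the paper: both identify the centre $C_k=\frac{aq_k^2-b}{q_k^2-1}$ and radius $R_k=q_k\frac{|b-a|}{q_k^2-1}$ of ${\rm Ap}(a,b,q_k)$ from~(\ref{Apollonius1}) and rewrite ``$\theta$ lies inside ${\rm Ap}(a,b,q_k)$'' as $|\theta-C_k|<R_k$ (and similarly with $\sqrt{q_k}$ for case~ii)), then invoke Theorem~\ref{thm0}. The paper phrases this via the translated polynomial $f(X+C_k)$, but that is merely a restatement of the same inequality.
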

Notice that if $b>a$ and we can prove that $q_k=1$ for some $k\geq 1$, by imposing the condition that $f(X+\frac{a+b}{2})$ is a Hurwitz stable polynomial, so that all the roots of $f$ lie in the half-plane $x<\frac{a+b}{2}$, then by Theorem \ref{thm0} iii) we may conclude that $f$ is the product of at most $k$ irreducible factors over $\mathbb{Q}$.  Recall that a necessary and sufficient condition for a polynomial to be Hurwitz stable is that it passes the Routh--Hurwitz test.

In some situations, instead of testing the conditions in Theorem \ref{thm0} or Theorem \ref{thm0Explicit}, 
it might be more convenient to use the maximum of the absolute values 
of the roots of $f$:

\begin{theorem}\label{thm1} 
Let $f(X)=a_{0}+a_{1}X+\cdots +a_{n}X^{n}\in \mathbb{Z}[X]$, let $M$ be the maximum of the absolute values of its roots,
and assume that for two integers $a,b$ we have $0<|f(a)|<|f(b)|$. Let $k$ be a positive integer and let $q_k$ be given by {\em (\ref{primulq})}. Then $f$ is the product of at most $k$ irreducible factors over $\mathbb{Q}$ in each one of the following three cases:

\emph{i)} \ \thinspace \thinspace $|b|>q_k|a|+(1+q_k)M$; 

\emph{ii)} \thinspace \thinspace $|b|>\sqrt{q_k}|a|+(1+\sqrt{q_k})M$ and 
$f$ has no rational roots;

\emph{iii)} \thinspace $q_k=1$, $a^2<b^2$ and $M<\frac{|a+b|}{2}$.
\end{theorem}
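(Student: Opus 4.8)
The plan is to derive Theorem \ref{thm1} from Theorem \ref{thm0} (or, equivalently, from Theorem \ref{thm0Explicit}) by showing that the hypotheses of Theorem \ref{thm1} force all roots of $f$ into the relevant Apollonius circle or half-plane. So the entire proof is a geometric containment argument: given a bound $M$ on $|\theta|$ for every root $\theta$, I want to locate the disk $\{|z|\le M\}$ inside ${\rm Ap}(a,b,q_k)$ in case i), inside ${\rm Ap}(a,b,\sqrt{q_k})$ in case ii), and inside the half-plane $\{x<\frac{a+b}{2}\}$ or $\{x>\frac{a+b}{2}\}$ in case iii).

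For cases i) and ii), I would recall from \eqref{Apollonius1} that ${\rm Ap}(a,b,r)$ is the circle centered at $c=\frac{ar^2-b}{r^2-1}$ with radius $\rho=r\frac{|b-a|}{r^2-1}$ (writing $r=q_k$ for i) and $r=\sqrt{q_k}$ for ii), and assuming $|b|>|a|$ so that the configuration $|f(a)|<|f(b)|$ together with the size hypothesis is consistent with $r>1$). A disk $\{|z|\le M\}$ is contained in the open disk of center $c$ and radius $\rho$ precisely when $|c|+M<\rho$. The key computation is therefore to check that the inequality $|b|>r|a|+(1+r)M$ implies $|c|+M<\rho$. One estimates $|c|\le \frac{|a|r^2+|b|}{r^2-1}$ by the triangle inequality, so it suffices to verify
\begin{equation*}
\frac{|a|r^2+|b|}{r^2-1}+M<\frac{r(|b|-|a|)}{r^2-1}+\frac{r|a|-|a|+|b|-|b|}{r^2-1},
\end{equation*}
which after clearing the positive denominator $r^2-1$ reduces, with a little rearrangement, exactly to $|b|>r|a|+(1+r)M$. (I would present this clean algebraic identity rather than grinding through it; the essential point is that $\rho-|c|-M$ is a positive multiple of $|b|-r|a|-(1+r)M$.) Then Theorem \ref{thm0} i) (respectively ii), invoking the no-rational-roots hypothesis) applies and gives the conclusion.

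For case iii), $q_k=1$ means the Apollonius "circle" has degenerated to the line $x=\frac{a+b}{2}$, and I must show the disk $\{|z|\le M\}$ lies strictly on one side of it. A point $z=x+iy$ with $|z|\le M$ has $|x|\le M$, hence $x\le M<\frac{|a+b|}{2}$; when $b>a$ and $a+b>0$ this gives $x<\frac{a+b}{2}$, placing every root in the required half-plane, and similarly $x\ge -M>-\frac{|a+b|}{2}=\frac{a+b}{2}$ when $a>b$ and $a+b<0$. The hypothesis $a^2<b^2$ is what guarantees $|a|<|b|$ so that $0<|f(a)|<|f(b)|$ is compatible and that exactly one of the two sign situations occurs; one checks both sub-cases symmetrically. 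Then Theorem \ref{thm0} iii) yields the result.

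I do not expect a genuine obstacle here: the content is entirely in the elementary inequality $|c|+M<\rho \iff |b|>r|a|+(1+r)M$, together with bookkeeping about the sign of $b-a$ and the sign of $a+b$ to make sure the inequalities point the right way and that $r>1$. The mildly delicate point worth stating carefully is that the hypothesis $0<|f(a)|<|f(b)|$ is automatically consistent with $|b|>r|a|+(1+r)M\ge r|a|>|a|$ (using $r\ge 1$ and $M\ge 0$, and $M>0$ unless $f$ is a monomial, a trivial case), so the setup of Theorem \ref{thm0} is legitimately in force; this should be remarked once at the start of the proof.
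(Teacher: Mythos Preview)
Your plan to reduce Theorem~\ref{thm1} to Theorem~\ref{thm0} is sound, and your treatment of case iii) is correct and matches the paper. But the execution of cases i) and ii) has a real gap: the claim that $\rho-|c|-M$ is a positive multiple of $|b|-r|a|-(1+r)M$ is false, and the crude bound $|c|\le\frac{|a|r^2+|b|}{r^2-1}$ is too lossy to close the argument. Take $a=1$, $b=10$, $r=2$: then $c=-2$, $\rho=6$, so $\rho-|c|-M=4-M$, whereas $|b|-r|a|-(1+r)M=8-3M$; these are not proportional and do not vanish together. Worse, your sufficient condition $\frac{|a|r^2+|b|}{r^2-1}+M<\rho$ becomes $M<\tfrac{4}{3}$ here, strictly stronger than the hypothesis $M<\tfrac{8}{3}$, so the implication you need does not follow by your route. (The displayed inequality you wrote is also garbled.) In fact the paper's Remark~\ref{Remark1} works out $\rho-|c|$ case by case precisely because it does \emph{not} reduce to the single expression $\frac{|b|-r|a|}{1+r}$.

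The fix is simple and is exactly what the paper does: bypass the center--radius description and use the defining property of ${\rm Ap}(a,b,r)$ directly. A point $\theta$ lies inside the circle iff $|b-\theta|>r|a-\theta|$, and for $|\theta|\le M$ the hypothesis gives in one line
\[
|b-\theta|\ge |b|-M>r|a|+rM\ge r(|a|+|\theta|)\ge r|a-\theta|.
\]
The paper phrases this as $\frac{|b-\theta_j|}{|a-\theta_j|}\ge\frac{|b|-M}{|a|+M}>q_k$ and plugs it into the contradiction setup of Theorem~\ref{thm0} (re-derived) rather than invoking that theorem as a black box. This also spares you the minor extra bookkeeping your approach would require when $q_k=1$, since Theorem~\ref{thm0} i) and ii) explicitly assume $q_k>1$, while the direct inequality above works uniformly for $q_k\ge 1$.
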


As we shall see later in Remark \ref{Remark1}, in some cases, conditions i) and ii) in Theorem \ref{thm0Explicit} may lead to sharper conditions than the corresponding ones in Theorem \ref{thm1}, even if we don't explicitly compute the coefficients of $f(X+\frac{aq_k^2-b}{q_k^2-1})$ and $f(X+\frac{aq_k-b}{q_k-1})$ to derive then estimates for the maximum of the absolute values of their roots. However, by computing these coefficients and avoiding unnecessary use of the triangle inequality, one might obtain even sharper conditions on $a$ and $b$.

When there is no information available on the prime factorization of $f'(a)$ and $f'(b)$, thus preventing one to use the admissible divisors of $f(a)$ and $f(b)$, we may content ourselves with slightly weaker results by allowing $d_1$ 
and $d_2$ in the definition of $q_k$
to be arbitrary divisors of $f(a)$ and $f(b)$, respectively. This will potentially increase $q_k$, leading to stronger restrictions on $a$ and $b$. The computation of $q_k$ requires analyzing inequalities between products of prime powers, so an explicit, effective formula of $q_k$ can be obtained only in a few cases where $f(a)$ and $f(b)$ have a small number of prime factors. 
On the other hand, one may relax the restrictions on $a$ and $b$ by finding sharp estimates for $M$, the maximum of the absolute values of the roots of $f$. The reader may benefit of the extensive literature on this subject, originating in the works of Cauchy and Lagrange. Here we will only refer the reader to the generalization for Cauchy's bound 
on the largest root of a polynomial \cite{MM3}, to a recent improvement of the bound of Lagrange \cite{BMS}, to some classical results relying on families of parameters obtained by Fujiwara \cite{Fujiwara}, Ballieu \cite{Ballieu}, 
\cite{Marden}, Cowling and Thron \cite{Cowling1}, \cite{Cowling2}, Kojima \cite{Kojima}, and to methods that use estimates for the characteristic 
roots for complex matrices \cite{Perron}. 
 
The results stated so far will be proved in Section \ref{se2}. We will present in Section \ref{applic} some corollaries of Theorem \ref{thm1}, for some cases where the prime factorizations of $f(a)$ and $f(b)$ allow one to conclude that $q_k=1$. Results that rely on information on the unitary divisors of $f(a)$ and $f(b)$ will be given in Section \ref{se3}. In Section \ref{se4} we will provide some analogous results for multivariate polynomials over arbitrary fields. Some examples will be given in the last section of the paper.

\section{Proofs for the case of admissible divisors}  \label{se2}

{\it Proof of Theorem \ref{thm0}} \ First of all let us notice that, if we fix the integers $a$ and $b$ as in the statement of the theorem, then $q_k$ is a decreasing function on $k$. Moreover, since $1$ is obviously an admissible divisor 
of $f(a)$ and $f(b)$, a possible candidate for $q_{k}$ is $1$, so $q_{k}\geq 1$ for each $k$.  Assume now to the contrary that $f$ is the product of $s$ factors $f_1,\dots ,f_s\in \mathbb{Z}[X]$ with $s\geq k+1$ and $f_1,\dots ,f_s$ irreducible over $\mathbb{Q}$. Next, let us select an index $i\in\{ 1,\dots ,s\} $ and let us denote
\[
g_i(X):=\prod\limits_ {j\neq i}f_j(X).
\]
We may thus write $f=f_i\cdot g_i$ for each $i=1,\dots ,s$. Now, since $f(a)=f_i(a)g_i(a)\neq 0$ and 
$f'(a)=f_i'(a)g_i(a)+f_i(a)g_i'(a)$, and similarly $f(b)=f_i(b)g_i(b)\neq 0$ and 
$f'(b)=f_i'(b)g_i(b)+f_i(b)g_i'(b)$, we see that $f_i(a)$ is a divisor 
$d_{i}$ of $f(a)$, and $f_i(b)$ is a divisor $d'_i$ of $f(b)$
that must also satisfy the following divisibility conditions
\[
\gcd \left( d_i,\frac{f(a)}{d_i}\right) \mid  \gcd (f(a),f'(a))\ 
\mbox{\rm \ and\ }\ \gcd \left( d'_i,\frac{f(b)}{d'_i}\right) \mid  \gcd (f(b),f'(b)). 
\]
Therefore $d_i$ and $d'_i$ are admissible divisors of $f(a)$ and $f(b)$, 
respectively, and this holds for each $i=1,\dots ,s$. Next, since
\[
\frac{d'_1}{d_1}\cdots \frac{d'_s}{d_s}=\frac{f(b)}{f(a)},
\]
one of the quotients $\frac{|d'_1|}{|d_1|},\dots ,\frac{|d'_s|}{|d_s|}$, 
say $\frac{|d'_1|}{|d_1|}$, must be less than or equal to 
$\sqrt[s]{\frac{|f(b)|}{|f(a)|}}$. In particular, this shows that we must have
\begin{equation}\label{q}
\frac{|f_1(b)|}{|f_1(a)|}\leq q_{s-1}.
\end{equation}
Assume now that $f$ factors as 
$f(X)=a_{n}(X-\theta _{1})\cdots (X-\theta _{n})$ for some complex numbers
$\theta _{1},\dots ,\theta _{n}$. Without loss of generality we may further assume that $\deg f_1=m\geq 1$, say, and that
\[
f_1(X)=b_{m}(X-\theta _{1})\cdots (X-\theta _{m}) 
\]
for some divisor $b_{m}$ of $a_{n}$.
Next, we observe that we may write
\[
\frac{f_1(b)}{f_1(a)}=\frac{b-\theta_1}{a-\theta_1}\cdots \frac{b-\theta_{m}}{a-\theta_{m}},
\]
so in view of (\ref{q}) for at least one index $j\in \{ 1,\dots ,m\} $ 
we must have
\begin{equation}\label{radical}
\frac{|b-\theta_j|}{|a-\theta_j|}\leq q_{s-1}^{\frac{1}{m}}.
\end{equation}
Now, let us first assume that $q_k>1$ and all the roots of $f$ lie inside 
the Apollonius circle ${\rm Ap}(a,b,q_k)$. In particular, since $\theta _j$ lies 
inside the Apollonius circle ${\rm Ap}(a,b,q_k)$, it must satisfy the inequality $|b-\theta_j|>q_k|a-\theta_j|$.
Since $q_k>1$ and $m\geq 1$, we have $q_k\geq q_k^{\frac{1}{m}}$, so we deduce 
that we actually have
\[
\frac{|b-\theta_j|}{|a-\theta_j|}>q_k^{\frac{1}{m}}\geq q_{s-1}^{\frac{1}{m}},
\]
as $q_k\geq q_{s-1}$, and this contradicts (\ref{radical}). Therefore $f$ is the product of at most $k$ irreducible factors over $\mathbb{Q}$.

Next, assume that $q_k>1$ and that all the roots of $f$ lie inside the 
Apollonius circle ${\rm Ap}(a,b,\sqrt{q_k})$. In particular, we have 
$|b-\theta_j|>\sqrt{q_k}|a-\theta_j|$. Since $f$ has no rational roots, 
we must have $m\geq 2$, so $\sqrt{q_k}\geq q_k^{\frac{1}{m}}$, which also 
leads us to the desired contradiction 
\[
\frac{|b-\theta_j|}{|a-\theta_j|}>q_k^{\frac{1}{m}}\geq q_{s-1}^{\frac{1}{m}},
\]
so in this case too one may write $f$ as a product of at most $k$ irreducible factors over $\mathbb{Q}$.

Finally, let us assume that $q_k=1$. Since $1\leq q_{s-1}\leq q_k$, this implies that $q_{s-1}$ is also equal to $1$, so in this case inequality (\ref{radical}) reads
\[
\frac{|b-\theta_j|}{|a-\theta_j|}\leq 1,
\]
which is equivalent to
\begin{equation}\label{inegab}
(b-Re(\theta _j))^2\leq (a-Re(\theta _j))^2.
\end{equation}
To contradict (\ref{inegab}), it is therefore
sufficient to ask all the roots of $f$ to lie in the half-plane $x<\frac{a+b}{2}$ 
if $a<b$, or in the half-plane $x>\frac{a+b}{2}$ if $a>b$, and this completes the proof.
\hfill  $\square $

\medskip

{\it Proof of Theorem \ref{thm0Explicit}} \ i) In view of (\ref{Apollonius1}), the abscissa of the center of the Apllonius circle ${\rm Ap}(a,b,q_k)$ is $C_k=\frac{aq_k^2-b}{q_k^2-1}$, and its radius is $R_k=q_k\frac{|b-a|}{q_k^2-1}$. The condition that all the roots of $f$ lie inside the Apollonius circle ${\rm Ap}(a,b,q_k)$ is equivalent to asking all the roots of the polynomial $f(X+C_k)$ to lie inside the circle centered in the origin and with radius $R_k$, or equivalenly, that all the roots of $f(X+C_k)$ have absolute values less than $R_k$. Thus, if the roots of $f$ are $\theta _1,\dots ,\theta _n$, say, then the roots of $f(X+C_k)$ are $\theta _1-C_k,\dots ,\theta _n-C_k$, and our condition on $\theta _1,\dots ,\theta _n$ reads
\begin{equation}\label{translatie}
\max\limits _{1\leq i\leq n} |\theta _i-C_k|<R_k.
\end{equation}

ii) Here the abscissa of the center of the Apllonius circle ${\rm Ap}(a,b,\sqrt{q_k})$ is $C'_k=\frac{aq_k-b}{q_k-1}$, and its radius is $R'_k=\sqrt{q_k}\frac{|b-a|}{q_k-1}$. One argues as in the previous case with $C'_k$ instead of $C_k$ and $R'_k$ instead of $R_k$.
\hfill  $\square $       

\begin{remark}\label{Remark1}\ We mention that, using the notation in the proof of Theorem \ref{thm0Explicit}, we have
\begin{equation}\label{complicat}
f(X+C_k)=\sum\limits _{i=0}^{n}b_iX^i\quad {\rm with}\quad b_i=\sum\limits _{j=i}^{n}a_j\tbinom jiC_k^{j-i},
\end{equation}
so when it is possible, to test (\ref{translatie}) one should compute the coefficients $b_i$ in (\ref{complicat}) and then apply some known estimates for the roots of $f(X+C_k)$ in terms of the $b_i$'s. When the computation of the $b_i$'s is rather difficult, one may use a condition stronger than (\ref{translatie}), that is
\[
\max\limits _{1\leq i\leq n}|\theta _i|<R_k-|C_k|,
\]
provided $R_k>|C_k|$, or equivalently, that $q_k|b-a|>|aq_k^2-b|$. In this way one may directly use some suitable estimates for $M:=\max\limits _{1\leq i\leq n}|\theta _i|$ in terms of the coefficients of $f$, thus avoiding the computation of the $b_i$'s, and in some cases the outcome may consist of sharper conditions on $a$ and $b$ than those in Theorem \ref{thm1} i). To see this, we will assume that $q_k>1$ and we will present the cases where $R_k>|C_k|$, depending on the signs of $a$ and $b$ and on their magnitude:

\smallskip

1) If $a>0>b$, $|b|>a$ and $q_k<\frac{|b|}{a}$, then $R_k-|C_k|=-\frac{aq_k+b}{q_k+1}>0$, and our condition $M<R_k-|C_k|$ leads us to $|b|>q_k|a|+(1+q_k)M$;

2) If $b>a>0$ and $q_k<\sqrt{\frac{b}{a}}$, then 
$R_k-|C_k|=\frac{aq_k+b}{q_k+1}>0$, and condition $M<R_k-|C_k|$ leads to $|b|>-q_k|a|+(1+q_k)M$;

3) If $b>a>0$ and $\sqrt{\frac{b}{a}}\leq q_k<\frac{b}{a}$, then 
$R_k-|C_k|=\frac{-aq_k+b}{q_k-1}>0$, and condition $M<R_k-|C_k|$ leads to $|b|>q_k|a|+(q_k-1)M$;

4) If $a=0$ and $b\neq 0$, then $R_k-|C_k|=\frac{|b|}{q_k+1}>0$, and condition $M<R_k-|C_k|$ leads to $|b|>(1+q_k)M$;

5) If $a<0<b$, $b>|a|$ and $q_k<\frac{b}{|a|}$, then 
$R_k-|C_k|=\frac{aq_k+b}{q_k+1}>0$, and condition $M<R_k-|C_k|$ leads to $|b|>q_k|a|+(1+q_k)M$;

6) If $b<a<0$ and $q_k<\sqrt{\frac{b}{a}}$, then 
$R_k-|C_k|=-\frac{aq_k+b}{q_k+1}>0$, and condition $M<R_k-|C_k|$ leads to $|b|>-q_k|a|+(1+q_k)M$;

7) If $b<a<0$ and $\sqrt{\frac{b}{a}}\leq q_k<\frac{b}{a}$, then 
$R_k-|C_k|=\frac{aq_k-b}{q_k-1}>0$, and condition $M<R_k-|C_k|$ leads to $|b|>q_k|a|+(q_k-1)M$.
\smallskip

Thus, in cases 2),\thinspace 3) and 6),\thinspace 7) we reach the same conclusion as in Theorem \ref{thm1} i), but with a less restrictive condition than $|b|>q_k|a|+(1+q_k)M$.

Similar considerations apply to case ii) of Theorem \ref{thm1} as well.
\end{remark}

{\it Proof of Theorem \ref{thm1}} \ The proof follows the same lines as in the case of 
Theorem \ref{thm0}, and we deduce again that for at least one index 
$j\in \{ 1,\dots ,m\} $ we must have
\begin{equation}\label{radical2}
\frac{|b-\theta_j|}{|a-\theta_j|}\leq q_{s-1}^{\frac{1}{m}}.
\end{equation}
On the other hand, if $|b|>q_k|a|+(1+q_k)M$ we observe that 
\[
\frac{|b-\theta_j|}{|a-\theta_j|}\geq \frac{|b|-|\theta_j|}{|a|+|\theta_j|}
\geq \frac{|b|-M}{|a|+M}>q_k\geq q_k^{\frac{1}{m}}\geq q_{s-1}^{\frac{1}{m}}, 
\]
since $q_k\geq 1$ and $q_k\geq q_{s-1}$. This contradicts (\ref{radical2}), so $f$ is the product of at most $k$ irreducible factors over $\mathbb{Q}$. 

In our second case, if we assume that $|b|>\sqrt{q_k}|a|+(1+\sqrt{q_k})M$ 
and $f$ has no rational roots, then $m\geq 2$, and consequently 
 \[
\frac{|b-\theta_j|}{|a-\theta_j|}\geq \frac{|b|-M}{|a|+M}>\sqrt{q_k}\geq q_k^{\frac{1}{m}}\geq q_{s-1}^{\frac{1}{m}}, 
\]
again a contradiction.

In our third case, let us assume that $q_k=1$, $a^2<b^2$ and
$M<\frac{|a+b|}{2}$. 
If $b>a$, then $a+b>0$ and the conclusion follows by Theorem \ref{thm0} iii) 
since the disk $\{ z:|z|\leq M\} $ containing all the roots of $f$ lies in the 
left half-plane $x<\frac{a+b}{2}$. On the other hand, if $a>b$, then $a+b<0$ and 
the disk $\{ z:|z|\leq M\} $ lies in the right half-plane $x>\frac{a+b}{2}$, 
as $-M>\frac{a+b}{2}$.  \hfill  $\square $       


\section{Applications}  \label{applic}

Our first application of Theorem \ref{thm1} is the following result.
\begin{corollary}\label{coro1main}
Let $f(X)=a_{0}+a_{1}X+\cdots +a_{n}X^{n}\in \mathbb{Z}[X]$, and $a$, $b$ two integers such that $a^2<b^2$ and
$|a_{n}|>\sum_{i=0}^{n-1}|a_{i}| \bigl(\frac{|a+b|}{2}\bigr) ^{i-n}$. Assume that $|f(a)|=p^{k_1}r$, $|f(b)|=p^{k_2}$ with $p$ a prime number and 
$k_1,k_2,r$ integers with $k_2>k_1\geq 0$ and $0<r<p$. Then $f$ is the product of at most $1+\lfloor (k_2-k_1-1)\log _{\frac{p}{r}}p \rfloor $ irreducible factors over $\mathbb{Q}$.
\end{corollary}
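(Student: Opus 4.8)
The plan is to deduce the corollary from part iii) of Theorem~\ref{thm1}. Set $\rho:=\frac{|a+b|}{2}$ and $k:=1+\lfloor (k_2-k_1-1)\log_{p/r}p\rfloor$, and verify the three hypotheses of Theorem~\ref{thm1} iii) for this $k$: that $a^2<b^2$ (assumed), that the maximum $M$ of the absolute values of the roots of $f$ satisfies $M<\rho$, and that $q_k=1$. Observe first that the overarching hypothesis $0<|f(a)|<|f(b)|$ holds, since $|f(a)|=p^{k_1}r\ge 1$ and $p^{k_1}r<p^{k_1+1}\le p^{k_2}=|f(b)|$ (using $0<r<p$ and $k_1+1\le k_2$); moreover $a^2<b^2$ forces $a+b\ne 0$, so $\rho>0$, and $k_2-k_1-1\ge 0$ gives $k\ge 1$.

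For the bound $M<\rho$ I would run the standard Cauchy-type argument: if $\theta$ were a root with $|\theta|\ge\rho$, then from $a_n\theta^n=-\sum_{i=0}^{n-1}a_i\theta^i$ one gets $|a_n|\le\sum_{i=0}^{n-1}|a_i|\,|\theta|^{i-n}\le\sum_{i=0}^{n-1}|a_i|\rho^{\,i-n}$, because each exponent $i-n$ is negative and $|\theta|\ge\rho$; this contradicts the hypothesis $|a_n|>\sum_{i=0}^{n-1}|a_i|\bigl(\frac{|a+b|}{2}\bigr)^{i-n}$. Hence every root of $f$ has absolute value strictly less than $\rho$, i.e. $M<\rho$.

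The core of the argument is showing $q_k=1$. Since the admissible divisors of an integer form a subset of all its divisors, and since $q_k\ge 1$ always (the ratio $1/1$ is a legitimate candidate, as $\sqrt[k+1]{|f(b)|/|f(a)|}>1$), it suffices to prove that the quantity $\widetilde q_k$ obtained by letting $d_1,d_2$ range over \emph{all} positive divisors of $f(a)$ and $f(b)$ equals $1$; then $1\le q_k\le\widetilde q_k=1$. Because $|f(a)|=p^{k_1}r$ with $\gcd(p,r)=1$ and $|f(b)|=p^{k_2}$, every such ratio $d_2/d_1$ has the form $p^{\,j-i}/s$ with $0\le i\le k_1$, $0\le j\le k_2$, $s\mid r$; it exceeds $1$ only when $j-i\ge 1$, and in that case $p^{\,j-i}/s\ge p/r$. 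So the least divisor-ratio exceeding $1$ is exactly $p/r$ (attained at $i=0$, $j=1$, $s=r$, a valid choice since $k_2\ge 1$), and therefore $\widetilde q_k=1$ as soon as $p/r>\sqrt[k+1]{|f(b)|/|f(a)|}=\sqrt[k+1]{p^{k_2-k_1}/r}$. Raising to the $(k+1)$-st power and clearing denominators turns this into $p^{\,k+1-k_2+k_1}>r^k$, and taking logarithms (with $\log(p/r)>0$) into $k>(k_2-k_1-1)\log_{p/r}p$. The chosen value $k=1+\lfloor (k_2-k_1-1)\log_{p/r}p\rfloor$ satisfies this, since $1+\lfloor x\rfloor>x$ for every real $x$. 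Hence $q_k=1$, and Theorem~\ref{thm1} iii) yields that $f$ is a product of at most $k=1+\lfloor (k_2-k_1-1)\log_{p/r}p\rfloor$ irreducible factors over $\mathbb{Q}$.

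There is no genuinely deep obstacle here; the work lies in assembling the pieces correctly. The one point that needs care is the reduction from admissible to arbitrary divisors together with the observation that $p/r$ is the smallest divisor-ratio exceeding $1$; once that is in place, the passage to the stated floor expression is a routine manipulation of the inequality $p/r>\sqrt[k+1]{p^{k_2-k_1}/r}$, and the bound $M<\rho$ is the classical Cauchy estimate read off from the coefficient hypothesis.
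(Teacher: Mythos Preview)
Your proof is correct and follows essentially the same route as the paper: bound $M<\frac{|a+b|}{2}$ from the coefficient hypothesis (the paper phrases this as an application of Rouch\'e's Theorem, you spell out the equivalent Cauchy-type estimate), then pass from $q_k$ to the larger $\tilde q_k$ over all divisors, identify $p/r$ as the smallest divisor ratio exceeding $1$, and solve $(p/r)^{k+1}>p^{k_2-k_1}/r$ for $k$ to land on the floor expression before invoking Theorem~\ref{thm1}~iii). Your write-up is in fact slightly more careful than the paper's in checking the standing hypotheses $0<|f(a)|<|f(b)|$, $a+b\ne 0$, and $k\ge 1$.
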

\begin{proof} \ 
An immediate consequence of 
Rouch\'e's Theorem is that the condition 
$|a_{n}|>\sum_{i=0}^{n-1}|a_{i}|\bigl(\frac{|a+b|}{2}\bigr) ^{i-n}$ forces 
all the roots of $f$ to have absolute values less than $\frac{|a+b|}{2}$. Therefore $M<\frac{|a+b|}{2}$. 
Note that for every positive integer $k$ we have $q_k\leq \tilde{q}_k$, with 
\begin{equation}\label{qmaimare}
\tilde{q}_k=\max \left\{ \frac{d_2}{d_1}\leq \sqrt[k+1]{\frac{|f(b)|}{|f(a)|}}: 
d_1\mid f(a), \ d_2\mid f(b) \right\} .
\end{equation}
We search for a positive integer $k$ as small as possible such that $\tilde{q}_k=1$, which will also imply that $q_k=1$. In view of (\ref{qmaimare})
we have
\[
\tilde{q}_k=\max \left\{ \frac{d_2}{d_1}\leq \sqrt[k+1]{\frac{p^{k_2-k_1}}{r}}: 
d_1\mid p^{k_1}r, \ d_2\mid p^{k_2} \right\} .
\]
Observe that the least quotient $\frac{d_2}{d_1}$ that exceeds $1$ with $d_1\mid p^{k_1}r$ and $d_2\mid p^{k_2}$ is $\frac{p}{r}$, so to prove that $\tilde{q}_k=1$ it suffices to prove that $\frac{p}{r}>\sqrt[k+1]{\frac{p^{k_2-k_1}}{r}}$, or equivalently that $k>(k_2-k_1-1)\log _{\frac{p}{r}}p$. Note that for $k_2-k_1>1$ and $r>1$, the term $(k_2-k_1-1)\log _{\frac{p}{r}}p$ cannot be an integer, as $p$ is a prime number. A suitable candidate for $k$ is obviously $1+\lfloor (k_2-k_1-1)\log _{\frac{p}{r}}p \rfloor $, and the conclusion follows from Theorem \ref{thm1}.  \end{proof}
We mention here that for an upper bound which depends only on $p$ and $k_2$, one may use $1+\lfloor (k_2-1)\log _{\frac{p}{p-1}}p \rfloor $, for instance. 
A special instance of Corollary \ref{coro1main} is the following result.

\begin{corollary}
\label{coro2} Let $f(X)=a_{0}+a_{1}X+\cdots +a_{n}X^{n}$ be a polynomial with 
integer coefficients, with $|a_{n}|>2|a_{n-1}|+2^{2}|a_{n-2}|+\cdots +2^{n}|a_{0}|$ 
and $a_{0}\neq 0$. If $f(\mathbb{Z}\setminus \{0\})$ or $-f(\mathbb{Z}\setminus \{0\})$ 
contains a prime power $p^k$ with $p>|a_0|$ and $k\geq 1$, then $f$ is the product of at most
\[
1+\lfloor (k-1)\log _{\frac{p}{|a_0|}}p \rfloor
\]
irreducible factors over $\mathbb{Q}$.
\end{corollary}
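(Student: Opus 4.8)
The plan is to derive Corollary \ref{coro2} from Corollary \ref{coro1main} by verifying its hypotheses with the specific choice $a=0$, $b=1$ (so $a^2 < b^2$ and $\frac{|a+b|}{2} = \frac12$), together with a suitable integer realizing the prime power. First I would observe that the coefficient condition $|a_n| > 2|a_{n-1}| + 2^2|a_{n-2}| + \cdots + 2^n|a_0|$ is precisely the inequality $|a_n| > \sum_{i=0}^{n-1} |a_i| \bigl(\tfrac12\bigr)^{i-n}$ appearing in Corollary \ref{coro1main}, since $\bigl(\tfrac12\bigr)^{i-n} = 2^{n-i}$. Hence that corollary's root-location hypothesis ($M < \frac{|a+b|}{2}$, forced via Rouch\'e) is available for \emph{every} pair $(a,b)$ with $\frac{|a+b|}{2} \le \frac12$; in particular for $(a,b)=(0,1)$.

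Next I would handle the prime-power hypothesis. By assumption there is an integer $c \in \mathbb{Z}\setminus\{0\}$ with $f(c) = \pm p^k$, $p > |a_0|$, $k \ge 1$ (replacing $f$ by $-f$ if necessary does not change the number of irreducible factors, so we may assume $f(c) = p^k$). I would then invoke Corollary \ref{coro1main} with $a = 0$ and $b = c$: note $a^2 = 0 < c^2 = b^2$, and $\frac{|a+b|}{2} = \frac{|c|}{2} \ge \frac12$, so the coefficient inequality $|a_n| > \sum_{i=0}^{n-1}|a_i|\bigl(\tfrac{|c|}{2}\bigr)^{i-n}$ follows a fortiori from the stronger hypothesis with $\frac12$ in place of $\frac{|c|}{2}$ (each term $\bigl(\tfrac{|c|}{2}\bigr)^{i-n}$ with $i-n < 0$ is at most $2^{n-i}$ when $|c| \ge 1$). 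Now $|f(a)| = |f(0)| = |a_0|$ and $|f(b)| = |f(c)| = p^k$. Writing $|a_0| = p^0 \cdot |a_0|$ with $r := |a_0|$, we have $0 < r < p$ by hypothesis, and with $k_1 = 0$, $k_2 = k$ we need $k_2 > k_1$, i.e. $k \ge 1$, which holds. Corollary \ref{coro1main} then gives that $f$ is a product of at most $1 + \lfloor (k_2 - k_1 - 1)\log_{p/r} p \rfloor = 1 + \lfloor (k-1)\log_{p/|a_0|} p \rfloor$ irreducible factors over $\mathbb{Q}$.

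There is a small degenerate case to check: if $|a_0| = 1$ then $r = 1$, and $\log_{p/r} p = \log_p p = 1$, so the bound reads $1 + \lfloor k-1 \rfloor = k$; the parenthetical remark after the proof of Corollary \ref{coro1main} about $(k_2-k_1-1)\log_{p/r} p$ not being an integer only matters when $r > 1$, so nothing goes wrong. Also one should note $|f(0)| = |a_0| \ne 0$ is guaranteed by the hypothesis $a_0 \ne 0$, so the admissible-divisor machinery applies. The main obstacle, such as it is, is simply bookkeeping: making sure the coefficient inequality survives the passage from the normalized point $\frac12$ to the actual $\frac{|c|}{2}$, and correctly matching the roles of $(p, r, k_1, k_2)$ in Corollary \ref{coro1main} to the data $(p, |a_0|, 0, k)$ here. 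No genuinely new estimate is needed; the corollary is a clean specialization.

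\begin{proof}
Replacing $f$ by $-f$ if necessary (which changes neither the coefficient hypothesis nor the number of irreducible factors of $f$ over $\mathbb{Q}$), we may assume there is an integer $c\neq 0$ with $f(c)=p^k$, where $p$ is prime, $p>|a_0|$ and $k\geq 1$. Apply Corollary \ref{coro1main} with $a=0$ and $b=c$. Then $a^2=0<c^2=b^2$, and since $|c|\geq 1$ we have $\bigl(\tfrac{|a+b|}{2}\bigr)^{i-n}=\bigl(\tfrac{|c|}{2}\bigr)^{i-n}\leq 2^{n-i}$ for each $i\in\{0,\dots,n-1\}$; hence the hypothesis $|a_n|>2|a_{n-1}|+2^2|a_{n-2}|+\cdots+2^n|a_0|$ yields
\[
|a_n|>\sum_{i=0}^{n-1}|a_i|\,2^{n-i}\geq \sum_{i=0}^{n-1}|a_i|\Bigl(\tfrac{|a+b|}{2}\Bigr)^{i-n},
\]
so the coefficient condition of Corollary \ref{coro1main} holds. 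Moreover $|f(a)|=|f(0)|=|a_0|\neq 0$ and $|f(b)|=|f(c)|=p^k$. Writing $|f(a)|=p^{k_1}r$ with $k_1=0$ and $r=|a_0|$, and $|f(b)|=p^{k_2}$ with $k_2=k$, we have $0<r<p$ by hypothesis and $k_2=k\geq 1>0=k_1$. Corollary \ref{coro1main} now shows that $f$ is the product of at most
\[
1+\bigl\lfloor (k_2-k_1-1)\log_{\frac{p}{r}}p\bigr\rfloor=1+\bigl\lfloor (k-1)\log_{\frac{p}{|a_0|}}p\bigr\rfloor
\]
irreducible factors over $\mathbb{Q}$.
\end{proof}
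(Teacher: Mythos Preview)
Your proof is correct and follows essentially the same approach as the paper: apply Corollary \ref{coro1main} with $a=0$ and $b$ the nonzero integer at which $|f|$ takes the value $p^k$, identify $k_1=0$, $k_2=k$, $r=|a_0|$, and check that the coefficient inequality survives because $|b|\geq 1$ forces $\bigl(\tfrac{|b|}{2}\bigr)^{i-n}\leq 2^{n-i}$. Your handling of the sign (replacing $f$ by $-f$) is slightly more explicit than the paper's, but unnecessary since Corollary \ref{coro1main} only involves $|f(a)|$ and $|f(b)|$.
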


\begin{proof} \ We will apply Corollary \ref{coro1main} with $a=0$ and $b$ an integer such that $f(b)=p^k$, so here $k_2=k$. Since $p>|a_0|=|f(0)|$ we have $k_1=0$ and $r=|a_0|<p$. As $b\neq 0$ we have $|b|\geq 1$, so our assumption on the magnitude of $a_n$ implies that
\begin{eqnarray*}
|a_{n}| & > & 2|a_{n-1}|+2^{2}|a_{n-2}|+\cdots +2^{n}|a_{0}|\\
 & \geq & 2|a_{n-1}|+\frac{2^{2}}{|b|^2}|a_{n-2}|+\cdots +\frac{2^{n}}{|b|^n}|a_{0}|=\sum_{i=0}^{n-1}|a_{i}| \left(\frac{|a+b|}{2}\right) ^{i-n}.
\end{eqnarray*}
Thus condition $|a_n|>\sum_{i=0}^{n-1}|a_{i}| \bigl(\frac{|a+b|}{2}\bigr) ^{i-n}$ in Corollary \ref{coro1main} is satisfied, so we may conclude that $f$ is the product of at most
$1+\lfloor (k-1)\log _{\frac{p}{|a_0|}}p \rfloor $
irreducible factors over $\mathbb{Q}$.
\end{proof}

Another application of Theorem \ref{thm1} is the following result.

\begin{corollary}\label{coro1main'}
Let $f(X)=a_{0}+a_{1}X+\cdots +a_{n}X^{n}\in \mathbb{Z}[X]$, and $a$, $b$ two integers such that $a^2<b^2$ and
$|a_{n}|>\sum_{i=0}^{n-1}|a_{i}| \bigl(\frac{|a+b|}{2}\bigr) ^{i-n}$. Assume that $|f(a)|=p^{k_1}$, $|f(b)|=p^{k_2}r$ with $p,r$ prime numbers with $p>r$ and 
$k_1,k_2$ integers with $k_2\geq k_1>0$. Then $f$ is the product of at most $1+\lfloor (k_2-k_1)\log _{r}p \rfloor $ irreducible factors over $\mathbb{Q}$.
\end{corollary}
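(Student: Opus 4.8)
The plan is to imitate closely the proof of Corollary \ref{coro1main}, replacing the estimate for $\tilde q_k$ accordingly. As before, Rouch\'e's Theorem together with the hypothesis $|a_n|>\sum_{i=0}^{n-1}|a_i|\bigl(\frac{|a+b|}{2}\bigr)^{i-n}$ forces every root of $f$ to have absolute value less than $\frac{|a+b|}{2}$, so $M<\frac{|a+b|}{2}$; combined with $a^2<b^2$ this puts us in a position to invoke Theorem \ref{thm1} iii) once we show that $q_k=1$ for a suitable $k$. As in the previous corollary we pass from $q_k$ to the larger quantity $\tilde q_k$ defined by (\ref{qmaimare}), with arbitrary divisors $d_1\mid f(a)$, $d_2\mid f(b)$, since showing $\tilde q_k=1$ automatically gives $q_k=1$.

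Next I would analyze the divisor structure. Here $|f(a)|=p^{k_1}$ has only the divisors $p^0,p^1,\dots,p^{k_1}$, while $|f(b)|=p^{k_2}r$ has divisors of the form $p^i$ or $p^i r$ with $0\le i\le k_2$. Thus a quotient $d_2/d_1$ with $d_1\mid f(a)$ and $d_2\mid f(b)$ has the shape $p^{i-j}$ or $p^{i-j}r$ with $0\le j\le k_1$ and $0\le i\le k_2$. The key elementary observation is that the smallest such quotient strictly exceeding $1$ is $r$ itself (take $i=j$, so $p^{i-j}r=r$, and since $1<r<p$ this beats every $p^{m}$ with $m\ge 1$ as well as every $p^m r$ with $m\ge 1$). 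Hence to conclude $\tilde q_k=1$ it suffices to force this minimal quotient $r$ to be larger than the cutoff $\sqrt[k+1]{|f(b)|/|f(a)|}=\sqrt[k+1]{p^{k_2-k_1}r}$, i.e.\ to require
\[
r>\sqrt[k+1]{p^{\,k_2-k_1}\,r},
\]
which upon raising to the power $k+1$ and simplifying reads $r^{k}>p^{\,k_2-k_1}$, that is, $k\log r>(k_2-k_1)\log p$, i.e.\ $k>(k_2-k_1)\log_r p$. A valid choice is $k=1+\lfloor (k_2-k_1)\log_r p\rfloor$, and Theorem \ref{thm1} then yields that $f$ is a product of at most $1+\lfloor (k_2-k_1)\log_r p\rfloor$ irreducible factors over $\mathbb{Q}$.

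One subtlety worth checking, mirroring the remark in the previous proof about $(k_2-k_1-1)\log_{p/r}p$ not being an integer, is the borderline case where $(k_2-k_1)\log_r p$ happens to be an integer; but this cannot occur: $(k_2-k_1)\log_r p\in\mathbb{Z}$ would force $p^{k_2-k_1}$ to be a power of $r$, impossible since $p\neq r$ are distinct primes and $k_2\ge k_1>0$ does not force $k_2=k_1$ (and if $k_2=k_1$ then $\tilde q_0=1$ already, giving the trivially correct bound $1$). So the strict inequality $k>(k_2-k_1)\log_r p$ is genuinely achieved by the floor-plus-one choice, and no off-by-one issue arises. The main obstacle, as in Corollary \ref{coro1main}, is simply the clean identification of the least quotient exceeding $1$ among all ratios $d_2/d_1$; once that is pinned down the rest is a short logarithmic manipulation followed by an application of Theorem \ref{thm1} iii).
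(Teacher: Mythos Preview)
Your argument is correct and follows essentially the same route as the paper, which in fact proves the slightly more general Corollary \ref{coro1main''} (allowing $r$ composite, with smallest prime factor $q$) by the identical mechanism: identify the least quotient $d_2/d_1>1$, here $r$, and choose $k$ so that $r>\sqrt[k+1]{p^{k_2-k_1}r}$. One small slip: in your parenthetical about the case $k_2=k_1$ you claim $\tilde q_0=1$, but in fact $\tilde q_0=r$ there (the cutoff is $\sqrt[1]{r}=r$, and $r$ is attained); this does not matter, since $k=1+\lfloor x\rfloor>x$ holds for every real $x\ge 0$, integer or not, so your entire borderline discussion is unnecessary and the floor-plus-one choice already suffices unconditionally.
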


We will prove here a more general version of Corollary \ref{coro1main'}, in which we allow $r$ to be composite:
\medskip

\begin{corollary}\label{coro1main''}
Let $f(X)=a_{0}+a_{1}X+\cdots +a_{n}X^{n}\in \mathbb{Z}[X]$, and $a$, $b$ two integers such that $a^2<b^2$ and
$|a_{n}|>\sum_{i=0}^{n-1}|a_{i}| \bigl(\frac{|a+b|}{2}\bigr) ^{i-n}$. Assume that $|f(a)|=p^{k_1}$, $|f(b)|=p^{k_2}r$ with $p$ a prime number, $r$ an integer with $0<r<p$ and 
$k_1,k_2$ integers with $k_2\geq k_1>0$. Let also $q$ be the smallest prime factor of $r$. Then $f$ is the product of at most $1+\lfloor (k_2-k_1)\log _{q}p +\log _{q}\frac{r}{q}\rfloor $ irreducible factors over $\mathbb{Q}$.
\end{corollary}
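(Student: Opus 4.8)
The plan is to follow the proof of Corollary \ref{coro1main} almost verbatim, the only genuinely new ingredient being the computation of the relevant quotient of divisors in the present configuration. First I would record that, since $k_{1}\geq 1$ and $r$ has the prime factor $q\geq 2$, one has $0<|f(a)|=p^{k_1}<p^{k_2}r=|f(b)|$, so Theorem \ref{thm1} is applicable. Next, exactly as in Corollary \ref{coro1main}, Rouch\'e's Theorem shows that the hypothesis $|a_{n}|>\sum_{i=0}^{n-1}|a_{i}|\bigl(\frac{|a+b|}{2}\bigr)^{i-n}$ forces every root of $f$ to have absolute value less than $\frac{|a+b|}{2}$, hence $M<\frac{|a+b|}{2}$. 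Since $a^{2}<b^{2}$ is assumed and $q_{k}\leq \tilde q_{k}$ for the quantity $\tilde q_{k}$ of (\ref{qmaimare}), it remains only to exhibit a positive integer $k$, as small as possible, for which $\tilde q_{k}=1$; Theorem \ref{thm1} iii) will then finish the proof.

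The core step is to determine the least quotient $d_{2}/d_{1}$ exceeding $1$ among divisors $d_{1}\mid |f(a)|=p^{k_1}$ and $d_{2}\mid |f(b)|=p^{k_2}r$. Writing $d_{1}=p^{j}$ with $0\leq j\leq k_{1}$ and $d_{2}=p^{i}e$ with $0\leq i\leq k_{2}$ and $e\mid r$, one has $d_{2}/d_{1}=p^{i-j}e$. If $i-j\geq 1$ then $p^{i-j}e\geq p>q$, using $q\leq r<p$; if $i-j\leq -1$ then $p^{i-j}e\leq r/p<1$; and if $i=j$ then $p^{i-j}e=e$ is a divisor of $r$, whose smallest value exceeding $1$ is the least prime factor $q$ of $r$. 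Hence every quotient exceeding $1$ is at least $q$, so $\tilde q_{k}=1$ if and only if $q>\sqrt[k+1]{|f(b)|/|f(a)|}=\sqrt[k+1]{p^{k_{2}-k_{1}}r}$, equivalently $(k+1)\log q>(k_{2}-k_{1})\log p+\log r$, equivalently $k>(k_{2}-k_{1})\log_{q}p+\log_{q}\frac{r}{q}$.

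Finally, I would take $k=1+\bigl\lfloor (k_{2}-k_{1})\log_{q}p+\log_{q}\frac{r}{q}\bigr\rfloor$. Because $q\mid r$, the number inside the floor is nonnegative, so $k\geq 1$ is a genuine positive integer; and since $1+\lfloor x\rfloor>x$ for every real $x$, this $k$ satisfies the strict inequality above, so $\tilde q_{k}=1$ and therefore $q_{k}=1$ (recall $q_{k}\geq 1$ always). Theorem \ref{thm1} iii) then gives that $f$ is the product of at most $k$ irreducible factors over $\mathbb{Q}$, which is the asserted bound; taking $r$ prime yields $q=r$ and $\log_{q}\frac{r}{q}=0$, recovering Corollary \ref{coro1main'}. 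I do not expect a real obstacle here: the whole argument is routine once one is careful in the case analysis on the sign of $i-j$ together with the chain $q\leq r<p$, which is precisely what pins the minimal quotient to $q$ rather than to an expression such as the $\frac{p}{r}$ that appeared in Corollary \ref{coro1main}.
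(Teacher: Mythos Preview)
Your proposal is correct and follows essentially the same route as the paper's proof: use Rouch\'e to get $M<\tfrac{|a+b|}{2}$, identify the least quotient $d_2/d_1>1$ as the smallest prime factor $q$ of $r$, translate $q>\sqrt[k+1]{p^{k_2-k_1}r}$ into the bound on $k$, and invoke Theorem \ref{thm1} iii). Your case analysis on the sign of $i-j$ spells out in more detail what the paper merely asserts (``the least quotient \ldots\ is $q$''), and your checks that $k\geq 1$ and $1+\lfloor x\rfloor>x$ are welcome but routine additions.
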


\begin{proof} \ Here too,
all the roots of $f$ have absolute values less than $\frac{|a+b|}{2}$, so $M<\frac{|a+b|}{2}$. In view of (\ref{qmaimare}), we deduce in this case that
\[
\tilde{q}_k=\max \left\{ \frac{d_2}{d_1}\leq \sqrt[k+1]{p^{k_2-k_1}r}: 
d_1\mid p^{k_1}, \ d_2\mid p^{k_2}r \right\} .
\]
Observe that here the least quotient $\frac{d_2}{d_1}$ that exceeds $1$ with $d_1\mid p^{k_1}$ and $d_2\mid p^{k_2}r$ is $q$, so to prove that $\tilde{q}_k=1$ it suffices to prove that $q>\sqrt[k+1]{p^{k_2-k_1}r}$, or equivalently that 
\[
k>(k_2-k_1)\log _{q}p+\log _{q}\frac{r}{q}.
\]
We may therefore choose $k=1+\lfloor (k_2-k_1)\log _{q}p +\log _{q}\frac{r}{q}\rfloor $, and the conclusion follows again from Theorem \ref{thm1}.  \end{proof}

Using the well-known Enestr\"om--Kakeya Theorem \cite{Kakeya}, saying 
that all the roots of a polynomial $f(X)=a_{0}+a_{1}X+\cdots +a_{n}X^{n}$ 
with real coefficients satisfying $0\leq a_{0}\leq a_{1}\leq \dots \leq a_{n}$ 
must have absolute values at most $1$, one can also prove the following results, in which we will only consider the case of positive integers $a,b$.
\begin{corollary}\label{EK}
Let $f\in\mathbb{Z}[X]$ be an Enestr\"om--Kakeya polynomial with $f(0)\neq 0$, and let $a$, $b$ be
two positive integers such that $f(a)=p^{k_1}r$, $f(b)=p^{k_2}$ with $p$ a prime number and 
$k_1,k_2,r$ integers with $k_2>k_1\geq 0$ and $0<r<p$. Then $f$ is the product of at most $1+\lfloor (k_2-k_1-1)\log _{\frac{p}{r}}p \rfloor $ irreducible factors over $\mathbb{Q}$.
\end{corollary}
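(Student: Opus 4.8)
The plan is to derive Corollary \ref{EK} from Theorem \ref{thm1} iii), following the same route as the proof of Corollary \ref{coro1main} but with the Rouch\'e-type hypothesis on the coefficients of $f$ replaced by the Enestr\"om--Kakeya estimate on the roots. First I would record the structural facts about an Enestr\"om--Kakeya polynomial $f(X)=a_{0}+a_{1}X+\cdots+a_{n}X^{n}$ with $f(0)\neq 0$: from $0\le a_{0}\le a_{1}\le\cdots\le a_{n}$ and $a_{0}\neq 0$ one gets $a_{n}\ge a_{0}>0$, hence $\deg f=n\ge 1$, all coefficients are nonnegative, and $f'(x)=\sum_{i=1}^{n}i\,a_{i}x^{i-1}>0$ for $x>0$ (the top term is positive, the rest nonnegative). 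Therefore $f$ is strictly increasing and positive on $[0,\infty)$.

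Next I would verify the hypotheses of Theorem \ref{thm1} iii). Since $0<r<p$ and $k_{2}\ge k_{1}+1$, we have $f(a)=p^{k_{1}}r<p^{k_{1}+1}\le p^{k_{2}}=f(b)$, so in particular $0<|f(a)|<|f(b)|$, as required for $q_k$ to be defined. Because $f$ is strictly increasing on $[0,\infty)$ and $a,b$ are positive integers, $f(a)<f(b)$ forces $a<b$; thus $a^{2}<b^{2}$ and $a+b\ge 3$, so $|a+b|=a+b$. By the Enestr\"om--Kakeya Theorem the maximum $M$ of the absolute values of the roots of $f$ satisfies $M\le 1<\tfrac{3}{2}\le\tfrac{a+b}{2}=\tfrac{|a+b|}{2}$, so the condition $M<\tfrac{|a+b|}{2}$ also holds.

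It then remains to exhibit a positive integer $k$ with $q_{k}=1$, and here the computation is exactly the one in the proof of Corollary \ref{coro1main}: with $|f(a)|=p^{k_{1}}r$ and $|f(b)|=p^{k_{2}}$ one has $q_{k}\le\tilde q_{k}$ with $\tilde q_{k}$ given by (\ref{qmaimare}); since $p$ is prime and $0<r<p$, the least quotient $d_{2}/d_{1}>1$ with $d_{1}\mid p^{k_{1}}r$ and $d_{2}\mid p^{k_{2}}$ is $\tfrac{p}{r}$, so $\tilde q_{k}=1$ as soon as $\tfrac{p}{r}>\sqrt[k+1]{p^{k_{2}-k_{1}}/r}$, i.e. $k>(k_{2}-k_{1}-1)\log_{\frac{p}{r}}p$; the value $k=1+\lfloor(k_{2}-k_{1}-1)\log_{\frac{p}{r}}p\rfloor$ works. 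Applying Theorem \ref{thm1} iii) with this $k$ yields the stated bound. There is no substantial obstacle; the only point requiring a little care is the passage from "$a,b$ positive integers'' to $a<b$ (hence $a^{2}<b^{2}$) and to the clean inequality $M<\tfrac{|a+b|}{2}$, which is precisely where monotonicity of Enestr\"om--Kakeya polynomials on $[0,\infty)$ together with the bound $M\le 1$ take over the role previously played by the hypothesis $|a_{n}|>\sum_{i<n}|a_{i}|\bigl(\tfrac{|a+b|}{2}\bigr)^{i-n}$ in Corollary \ref{coro1main}.
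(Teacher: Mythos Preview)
Your proposal is correct and follows essentially the same approach as the paper: use the Enestr\"om--Kakeya bound $M\le 1$, deduce $a<b$ (hence $a^{2}<b^{2}$ and $a+b\ge 3>2M$) from the monotonicity of $f$ on $[0,\infty)$, and then invoke the $\tilde q_k$ computation from Corollary \ref{coro1main} together with Theorem \ref{thm1} iii). The paper's proof is terser (it simply says ``$f$ has positive coefficients, so $a<b$'' rather than differentiating), but the logic is the same.
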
 

\begin{proof} \ Here, by the Enestr\"om--Kakeya 
Theorem all the roots of $f$ must have modulus at most $1$, so $M\leq 1$. Since $f(a)<f(b)$ and $f$ has positive coefficients, we must have $a<b$, which also shows that $a+b>2$, hence condition $M<\frac{|a+b|}{2}$ in Theorem \ref{thm1} iii) is obviously satisfied. To prove that $q_k=1$ with $k=1+\lfloor (k_2-k_1-1)\log _{\frac{p}{r}}p \rfloor $, one may argue as in the proof of Corollary \ref{coro1main}, and the conclusion follows by Theorem \ref{thm1} iii).  \end{proof}

Another result for Enestr\"om--Kakeya polynomials is the following.
\begin{corollary}\label{EK2}
Let $f\in\mathbb{Z}[X]$ be an Enestr\"om--Kakeya polynomial with $f(0)\neq 0$, and let $a$, $b$ be
two positive integers such that $f(a)=p^{k_1}$, $f(b)=p^{k_2}r$ with $p,r$ prime numbers with $p>r$, and 
$k_1,k_2$ integers with $k_2\geq k_1>0$. Then $f$ is the product of at most $1+\lfloor (k_2-k_1)\log _{r}p \rfloor $ irreducible factors over $\mathbb{Q}$.
\end{corollary}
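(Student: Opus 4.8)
The plan is to follow the proof of Corollary \ref{EK} almost verbatim, but with the role of Corollary \ref{coro1main} played by Corollary \ref{coro1main'} (equivalently, the special case $q=r$ of Corollary \ref{coro1main''}). Since $f(a)=p^{k_1}\neq p^{k_2}r=f(b)$, the polynomial $f$ is non-constant and both of these values are nonzero, so Theorem \ref{thm1} is applicable with these two integers $a,b$.

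First I would check the two remaining hypotheses of Theorem \ref{thm1} iii). The Eneström--Kakeya Theorem gives that every root of $f$ has modulus at most $1$, so $M\le 1$. As $f(0)\neq 0$ and $0\le a_0\le a_1\le\cdots\le a_n$, all coefficients of $f$ are positive, hence $f$ is strictly increasing on the positive reals; since $k_2\ge k_1$ and $r\ge 2$ give $f(a)=p^{k_1}<p^{k_2}r=f(b)$, it follows that $a<b$. Thus $0<a<b$, which yields $a^2<b^2$ and $a+b\ge 3>2$, so that $M\le 1<\frac{a+b}{2}=\frac{|a+b|}{2}$. It remains only to produce a positive integer $k$ with $q_k=1$.

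For this I would bound $q_k\le\tilde q_k$ with $\tilde q_k$ given by (\ref{qmaimare}), so that
\[
\tilde q_k=\max\left\{\frac{d_2}{d_1}\le\sqrt[k+1]{p^{k_2-k_1}r}\ :\ d_1\mid p^{k_1},\ d_2\mid p^{k_2}r\right\}.
\]
Every admissible ratio here equals $p^{j-i}$ or $p^{j-i}r$ with $0\le i\le k_1$ and $0\le j\le k_2$; among those exceeding $1$, the smallest candidates are $p$ (from $j=i+1$, no factor $r$) and $r$ (from $j=i$, with the factor $r$), so since $r<p$ the least quotient exceeding $1$ is exactly $r$. Hence $\tilde q_k=1$ as soon as $r>\sqrt[k+1]{p^{k_2-k_1}r}$, i.e. $r^{k}>p^{k_2-k_1}$, i.e. $k>(k_2-k_1)\log_r p$; the smallest such positive integer is $k=1+\lfloor(k_2-k_1)\log_r p\rfloor$. (The number $(k_2-k_1)\log_r p$ is an integer only when $k_2=k_1$, since $p$ and $r$ are distinct primes, and then this $k$ equals $1$; in every case $k\ge 1$.) With this $k$ we get $q_k=1$, and Theorem \ref{thm1} iii) then gives that $f$ is the product of at most $k$ irreducible factors over $\mathbb{Q}$.

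I do not anticipate a real obstacle. The only point needing a moment's care is the claim that, under $r<p$, no ``mixed'' ratio $p^{j-i}r$ with $j<i$ can lie strictly between $1$ and $r$; but such a ratio equals $r/p^{\,i-j}<1$, so it never competes, and the least admissible quotient exceeding $1$ is genuinely $r$. Everything else is the bookkeeping with floors and the monotonicity of $f$ on the positive axis already used in the proof of Corollary \ref{EK}.
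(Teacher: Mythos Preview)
Your proposal is correct and follows essentially the same approach as the paper: the paper's proof simply says to argue as in Corollary~\ref{EK}, using the fact (established as in Corollary~\ref{coro1main'}) that $q_k=1$ for $k=1+\lfloor (k_2-k_1)\log_r p\rfloor$, which is exactly what you do in detail.
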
 

\begin{proof} \ We argue as before, this time using the  fact that $q_k=1$ with  $k=1+\lfloor (k_2-k_1)\log _{r}p \rfloor $, as in
Corollary \ref{coro1main'}.
\end{proof}
One may also prove the following analogous results for Littlewood polynomials, that is for polynomials all of whose coefficients are $\pm1$. 

\begin{corollary}\label{LW}
Let $f$ be a Littlewood polynomial, and $a$, $b$ 
two nonnegative integers with $a+b\geq 4$ such that $f(a)=p^{k_1}r$, $f(b)=p^{k_2}$ with $p$ a prime number and 
$k_1,k_2,r$ integers with $k_2>k_1\geq 0$ and $0<r<p$. Then $f$ is the product of at most $1+\lfloor (k_2-k_1-1)\log _{\frac{p}{r}}p \rfloor $ irreducible factors over $\mathbb{Q}$.
\end{corollary}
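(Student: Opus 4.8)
The plan is to reduce Corollary \ref{LW} to Theorem \ref{thm1} iii) by verifying its two hypotheses: that the maximum $M$ of the absolute values of the roots of $f$ satisfies $M<\frac{|a+b|}{2}$, and that there is a positive integer $k$ with $q_k=1$ for which $k=1+\lfloor (k_2-k_1-1)\log_{\frac{p}{r}}p\rfloor$. The overall skeleton is essentially the same as in the proof of Corollary \ref{EK}, the only genuinely new input being a bound on the roots of a Littlewood polynomial in place of the Enestr\"om--Kakeya bound.

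First I would bound $M$. A Littlewood polynomial $f(X)=\sum_{i=0}^n a_iX^i$ has $|a_i|=1$ for all $i$, so by Cauchy's classical bound (or directly: if $|\theta|>2$ then $|a_n\theta^n|>|a_{n-1}\theta^{n-1}+\cdots+a_0|$ since $|\theta|^n>|\theta|^{n-1}+\cdots+1=\frac{|\theta|^n-1}{|\theta|-1}$ when $|\theta|\geq 2$) every root $\theta$ satisfies $|\theta|<2$, hence $M<2$. Now the hypothesis $a+b\geq 4$ with $a,b$ nonnegative gives $\frac{|a+b|}{2}=\frac{a+b}{2}\geq 2>M$, so the condition $M<\frac{|a+b|}{2}$ of Theorem \ref{thm1} iii) holds. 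One also needs $a^2<b^2$ (equivalently $|a|<|b|$, equivalently here $a<b$): this follows because $0<|f(a)|=p^{k_1}r<p^{k_2}=|f(b)|$ (the inequality $p^{k_1}r<p^{k_2}$ holds as $k_2>k_1$ and $0<r<p$), and one must check that this forces $a<b$ — this is where a little care is needed; presumably one argues that $a=b$ is impossible since $|f(a)|\neq|f(b)|$, but distinguishing $a<b$ from $a>b$ may require a monotonicity or sign remark, or simply relabeling $a,b$ so that $|f(a)|<|f(b)|$ and observing the resulting constraint. Actually, since the roles of which of $a,b$ is larger only enters through $a^2<b^2$ and $0<|f(a)|<|f(b)|$, and both are assumed or deducible, this is routine.

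Next I would handle $q_k=1$ exactly as in Corollary \ref{coro1main}: since $1\leq q_k\leq\tilde q_k$ with $\tilde q_k$ as in (\ref{qmaimare}), it suffices to find $k$ with $\tilde q_k=1$. Writing $|f(a)|=p^{k_1}r$ and $|f(b)|=p^{k_2}$, the least quotient $\frac{d_2}{d_1}>1$ with $d_1\mid p^{k_1}r$ and $d_2\mid p^{k_2}$ is $\frac{p}{r}$ (since $r<p$, a divisor $d_2=p^j$ of $p^{k_2}$ beats a divisor $d_1=p^ir'$ of $p^{k_1}r$ most narrowly when $d_2/d_1=p/r$). So $\tilde q_k=1$ as soon as $\frac{p}{r}>\sqrt[k+1]{p^{k_2-k_1}/r}$, i.e. $k>(k_2-k_1-1)\log_{\frac{p}{r}}p$, and $k=1+\lfloor(k_2-k_1-1)\log_{\frac{p}{r}}p\rfloor$ works (noting, as in Corollary \ref{coro1main}, that when $k_2-k_1>1$ and $r>1$ the quantity $(k_2-k_1-1)\log_{\frac{p}{r}}p$ is irrational, so the floor is strict). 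With $q_k=1$, $a^2<b^2$, and $M<\frac{|a+b|}{2}$ in hand, Theorem \ref{thm1} iii) gives that $f$ is the product of at most $k$ irreducible factors over $\mathbb{Q}$, which is the claim.

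The main obstacle, such as it is, is simply the root bound for Littlewood polynomials: one must pick a clean, self-contained estimate ($M<2$ via Cauchy's bound) that is compatible with the numerical threshold $a+b\geq 4$ appearing in the statement — this threshold is clearly chosen precisely so that $\frac{a+b}{2}\geq 2>M$. Everything else is a direct transcription of the argument already carried out for Corollaries \ref{coro1main} and \ref{EK}, so no new idea is required beyond invoking the Littlewood structure to control $M$.
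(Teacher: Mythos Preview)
Your plan coincides with the paper's: prove $M<2$ from the Littlewood hypothesis, use $a+b\geq 4$ to get $M<\tfrac{|a+b|}{2}$, show $q_k=1$ for $k=1+\lfloor (k_2-k_1-1)\log_{p/r}p\rfloor$ exactly as in Corollary~\ref{coro1main}, and apply Theorem~\ref{thm1}~iii). You are in fact more careful than the paper in flagging that Theorem~\ref{thm1}~iii) also requires $a^2<b^2$; the paper's proof does not address this at all.

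Your resolution of that point, however, does not work. Relabelling $a$ and $b$ is not available: the hypotheses pin $f(a)=p^{k_1}r$ and $f(b)=p^{k_2}$, and the computation of $q_k$ depends on which value has which form, so swapping destroys the argument. And $|f(a)|<|f(b)|$ alone does not force $a<b$ for an arbitrary polynomial, so calling this ``routine'' skips a genuine step. A correct verification reuses the bound $M<2$ you already proved: for real $x\geq 2$ the sign of $f(x)$ equals that of the leading coefficient $a_n$, so $f(a),f(b)>0$ together with $\max(a,b)\geq 2$ (from $a+b\geq 4$) forces $a_n=1$. Then $f(X)=\prod_i(X-\theta_i)$ is, after pairing complex-conjugate roots, a product of real linear and quadratic factors each positive and strictly increasing on $[2,\infty)$, hence $f$ itself is strictly increasing there. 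If both $a,b\geq 2$ this gives $a<b$ from $f(a)<f(b)$; if $a\leq 1$ then $a+b\geq 4$ gives $b\geq 3>a$ directly; and $b\leq 1$ is excluded since $|f(0)|=1<p^{k_2}$ and, for $b=1$, one would need $a\geq 3$ yet $f(a)<f(1)\leq n+1$, which contradicts the elementary lower bound $f(a)\geq a^n-\tfrac{a^n-1}{a-1}$. With $a<b$ secured, your argument and the paper's coincide.
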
 

\begin{proof} \ Here we use the fact that all the roots of a Littlewood polynomial have absolute values less than $2$, so $M<2$. Indeed, assume to the contrary that $f$ would have a root $\theta $ with $|\theta |\geq 2$. Then we would obtain
\begin{eqnarray*}
0 & = & |\sum\limits _{i=0}^{n}a_i\theta ^{i-n}|\geq |a_n|-\left(\sum\limits_{i=0}^{n-1}|a_i|\cdot |\theta |^{i-n}\right)\\
 & = & 1-\frac{1}{|\theta |}-\frac{1}{|\theta |^2}-\cdots -\frac{1}{|\theta |^{n}}\geq 1-\frac{1}{2}-\frac{1}{2^2}-\cdots -\frac{1}{2^n}>0,
\end{eqnarray*}
a contradiction. Since $a+b\geq 4$ and $M<2$, condition $M<\frac{|a+b|}{2}$ in Theorem \ref{thm1} iii) is obviously satisfied. Again, one proves as in  Corollary \ref{coro1main} that we have $q_k=1$ with $k=1+\lfloor (k_2-k_1-1)\log _{\frac{p}{r}}p \rfloor $, and the conclusion follows by Theorem \ref{thm1} iii).  \end{proof}
\begin{corollary}\label{LW2}
Let $f$ be a Littlewood polynomial, and $a$, $b$ 
two positive integers with $a+b\geq 4$ such that  $f(a)=p^{k_1}$, $f(b)=p^{k_2}r$ with $p,r$ prime numbers with $p>r$, and 
$k_1,k_2$ integers with $k_2\geq k_1>0$. Then $f$ is the product of at most $1+\lfloor (k_2-k_1)\log _{r}p \rfloor $ irreducible factors over $\mathbb{Q}$.
\end{corollary}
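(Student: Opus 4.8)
The plan is to derive the statement from Theorem~\ref{thm1}~iii), following the pattern of Corollaries~\ref{LW}, \ref{coro1main'} and \ref{coro1main''}. Denoting by $M$ the maximum of the absolute values of the roots of $f$, I would check in turn that $M<2$, that $M<\frac{|a+b|}{2}$, that $a^{2}<b^{2}$, and that $q_{k}=1$ for $k=1+\lfloor (k_{2}-k_{1})\log_{r}p\rfloor$; the first and the last of these are essentially quotations of earlier arguments.

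For the root bound I would repeat the computation from the proof of Corollary~\ref{LW}: if $f$ had a root $\theta$ with $|\theta|\geq 2$, then dividing $f(\theta)=0$ by $\theta^{n}$ and using $|a_{i}|=1$ for all $i$ gives $0\geq 1-\tfrac{1}{|\theta|}-\tfrac{1}{|\theta|^{2}}-\cdots-\tfrac{1}{|\theta|^{n}}\geq 1-\tfrac{1}{2}-\tfrac{1}{2^{2}}-\cdots-\tfrac{1}{2^{n}}>0$, a contradiction; hence $M<2$, and since $a+b\geq 4$ this yields $M<2\leq\frac{|a+b|}{2}$. For $a^{2}<b^{2}$ I would first observe, via $f'(x)/f(x)=\sum_{i}(x-\theta_{i})^{-1}$ and $Re(\theta_{i})\leq|\theta_{i}|<2$, that $f'(x)$ and $f(x)$ have the same sign for real $x\geq 2$, so $|f|$ is strictly increasing on $[2,\infty)$. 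Since $|f(a)|=p^{k_{1}}$ and $|f(b)|=p^{k_{2}}r$ have distinct prime factorisations, $a\neq b$, and $p^{k_{2}}r\geq 2p^{k_{1}}>p^{k_{1}}$, i.e. $|f(a)|<|f(b)|$; if $a,b\geq 2$ then monotonicity of $|f|$ forces $a<b$, while $a=1$ forces $b\geq 3>a$ directly, and $b=1$ cannot occur (then $a\geq 3$, so $|f(a)|\geq|f(3)|\geq\frac{3^{n}+1}{2}$, whereas $|f(1)|\leq n+1$ and $|f(1)|=p^{k_{2}}r\geq 6$ forces $n\geq 5$, contradicting $|f(a)|<|f(b)|$). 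Hence $a^{2}<b^{2}$.

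It remains to show $q_{k}=1$ for the stated $k$, which is the arithmetic core; this is handled exactly as in Corollary~\ref{coro1main''} in the case where $r$ is prime, so that its smallest prime factor is $q=r$. Using $q_{k}\leq\tilde q_{k}$ with $\tilde q_{k}$ as in (\ref{qmaimare}): since $d_{1}\mid p^{k_{1}}$ and $d_{2}\mid p^{k_{2}}r$, the least quotient $d_{2}/d_{1}$ exceeding $1$ is $r$, so $\tilde q_{k}=1$ as soon as $r>\sqrt[k+1]{p^{k_{2}-k_{1}}r}$, i.e. $r^{k}>p^{k_{2}-k_{1}}$, i.e. $k>(k_{2}-k_{1})\log_{r}p$. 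As $p\neq r$ are distinct primes, $(k_{2}-k_{1})\log_{r}p$ is never a positive integer (it is $0$ when $k_{2}=k_{1}$), so $k=1+\lfloor (k_{2}-k_{1})\log_{r}p\rfloor$ works, giving $q_{k}=1$. Theorem~\ref{thm1}~iii) then finishes the proof.

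I do not expect a genuine obstacle: the two substantial inputs — the modulus bound $M<2$ for Littlewood polynomials and the divisor inequality yielding $q_{k}=1$ — are taken over almost verbatim from the proofs of Corollaries~\ref{LW} and \ref{coro1main''}. The only step that really needs care, and the one I would double-check, is $a^{2}<b^{2}$: since a Littlewood polynomial need not be monotone on $(0,\infty)$, one cannot argue as in the Enestr\"om--Kakeya case, and must instead use monotonicity of $|f|$ on $[2,\infty)$ together with the crude estimate $|f(1)|\leq n+1$ to rule out the remaining small pair $\min\{a,b\}=1$ with the wrong ordering.
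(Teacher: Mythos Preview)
Your approach is exactly the paper's --- apply Theorem~\ref{thm1}~iii) using $M<2$ (as in the proof of Corollary~\ref{LW}) and $q_k=1$ (as in the argument of Corollary~\ref{coro1main''} with $q=r$). Your write-up is in fact more careful than the paper's one-line proof, since you supply a verification of the hypothesis $a^2<b^2$ that the paper silently omits; your argument for this (monotonicity of $|f|$ on $[2,\infty)$ together with the crude bound $|f(1)|\leq n+1$ to dispose of the boundary case $b=1$) is correct.
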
 
\begin{proof} \ Here we use the fact that $q_k=1$ with $k=1+\lfloor (k_2-k_1)\log _{r}p \rfloor $, and also the fact that $M<2$. The conclusion follows again by Theorem \ref{thm1} iii).
\end{proof}
\section{The case of unitary divisors}  \label{se3}
The aim of this section is to find upper bounds for the sum of the multiplicities of the irreducible factors by studying the unitary divisors of $f(a)$ and $f(b)$. Instead of $q_k$ given by 
(\ref{primulq}), we will use here a potentially smaller rational number $q^{u}_{k}$, defined by
\begin{equation}\label{aldoileaq}
q^{u}_{k}=\max \left\{ \frac{d_2}{d_1}\leq \sqrt[k+1]{\frac{|f(b)|}{|f(a)|}}: 
d_1\in \mathcal{D}_u(f(a)), \ d_2\in \mathcal{D}_u(f(b))\right\} .
\end{equation}
With this notation we have the  following results, that are similar to the theorems in the case of admissible divisors.
\begin{theorem}\label{thm0unitary}
Let $f(X)=a_{0}+a_{1}X+\cdots +a_{n}X^{n}\in \mathbb{Z}[X]$, and assume 
that for two integers $a,b$ we have $0<|f(a)|<|f(b)|$ and 
$\gcd(f(a),f'(a))=\gcd(f(b),f'(b))=1$. Let $k$ be a positive intiger and let $q^u_k$ be given by {\em (\ref{aldoileaq})}. Then $f$ is the product of at most $k$ irreducible factors over $\mathbb{Q}$ in each one of the following three cases:

\emph{i)} \ \thinspace $q^u_k>1$ and all the roots of $f$ lie inside the Apollonius  circle ${\rm Ap}(a,b,q^u_k)$;

\emph{ii)} \thinspace $q^u_k>1$, all the roots of $f$ lie inside the Apollonius circle ${\rm Ap}(a,b,\sqrt{q^u_k})$, and $f$ has no rational roots;

\emph{iii)} $q^u_k=1$ and either $b>a$ and all the roots of $f$ lie in 
the half-plane $x<\frac{a+b}{2}$, or $a>b$ and all the roots of $f$ lie in the half-plane $x>\frac{a+b}{2}$.
\end{theorem}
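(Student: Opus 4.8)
The plan is to observe first that the coprimality hypotheses make the two notions of divisor in Definition \ref{admissible} coincide, and then to repeat the proof of Theorem \ref{thm0} verbatim. Indeed, for $d\mid f(a)$ the condition $\gcd(d,f(a)/d)\mid\gcd(f(a),f'(a))=1$ is the same as $\gcd(d,f(a)/d)=1$, so $\mathcal{D}_{ad}(f(a))=\mathcal{D}_u(f(a))$, and likewise $\mathcal{D}_{ad}(f(b))=\mathcal{D}_u(f(b))$. Comparing (\ref{primulq}) and (\ref{aldoileaq}), this gives $q^u_k=q_k$ for every $k$, so the statement is literally a special case of Theorem \ref{thm0}. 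In particular it inherits the preliminary facts used there: $q^u_k$ is non-increasing in $k$, and $q^u_k\geq 1$ because $1$ is a unitary divisor of both $f(a)$ and $f(b)$.

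For a self-contained write-up one would argue by contradiction, assuming $f=f_1\cdots f_s$ with $s\geq k+1$ and each $f_i\in\mathbb{Z}[X]$ irreducible over $\mathbb{Q}$. Writing $f=f_ig_i$ with $g_i=\prod_{j\neq i}f_j$ and differentiating, $\gcd(f_i(a),f(a)/f_i(a))$ divides $\gcd(f(a),f'(a))=1$, so here $f_i(a)$ is automatically a \emph{unitary} divisor of $f(a)$ (not merely an admissible one), and similarly $f_i(b)$ is a unitary divisor of $f(b)$, for each $i$. From $\prod_{i=1}^s\frac{f_i(b)}{f_i(a)}=\frac{f(b)}{f(a)}$ one of the factors, say $\frac{|f_1(b)|}{|f_1(a)|}$, is at most $\sqrt[s]{|f(b)|/|f(a)|}\leq\sqrt[k+1]{|f(b)|/|f(a)|}$, so by the definition (\ref{aldoileaq}) and the monotonicity of $q^u$ one gets $\frac{|f_1(b)|}{|f_1(a)|}\leq q^u_{s-1}\leq q^u_k$. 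Factoring $f_1(X)=b_m(X-\theta_1)\cdots(X-\theta_m)$ with $m=\deg f_1\geq 1$ and using $\frac{f_1(b)}{f_1(a)}=\prod_j\frac{b-\theta_j}{a-\theta_j}$, some root $\theta_j$ of $f_1$ satisfies $\frac{|b-\theta_j|}{|a-\theta_j|}\leq (q^u_{s-1})^{1/m}$. One then contradicts this exactly as in the proof of Theorem \ref{thm0}: in case (i), the membership of $\theta_j$ in ${\rm Ap}(a,b,q^u_k)$ gives $\frac{|b-\theta_j|}{|a-\theta_j|}>q^u_k\geq (q^u_k)^{1/m}\geq (q^u_{s-1})^{1/m}$; in case (ii) the absence of rational roots forces $m\geq 2$, whence $\sqrt{q^u_k}\geq (q^u_k)^{1/m}$ and the circle ${\rm Ap}(a,b,\sqrt{q^u_k})$ does the job; in case (iii), $q^u_k=1$ forces $q^u_{s-1}=1$, hence $(b-Re(\theta_j))^2\leq (a-Re(\theta_j))^2$, which is impossible once every root lies strictly on the prescribed side of the line $x=\frac{a+b}{2}$. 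This contradiction gives $s\leq k$.

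There is essentially no obstacle here: the only point needing verification is that the coprimality hypothesis turns admissible divisors into unitary divisors (so that $q^u_k=q_k$), and this is immediate from Definition \ref{admissible}; everything after that is a transcription of the proof of Theorem \ref{thm0} with $q^u_k$ in place of $q_k$. The genuine content of the statement is conceptual rather than technical — when $f(a),f'(a)$ and $f(b),f'(b)$ are coprime one is entitled to work with the a priori smaller, hence more favorable, set of unitary divisors without any loss.
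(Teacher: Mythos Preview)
Your proposal is correct and follows the paper's own route: observe that the coprimality hypotheses force $f_i(a)$ and $f_i(b)$ to be unitary (not merely admissible) divisors, and then transcribe the proof of Theorem~\ref{thm0} with $q^u_k$ in place of $q_k$. Your opening remark that $\mathcal{D}_{ad}=\mathcal{D}_u$ under the hypotheses, so $q^u_k=q_k$ and the statement is literally a special case of Theorem~\ref{thm0}, is a small shortcut the paper does not take explicitly, but it is entirely valid.
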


\begin{proof}\ Using the same notations as in the proof of Theorem \ref{thm0}, 
we observe that our assumption that $\gcd(f(a),f'(a))=\gcd(f(b),f'(b))=1$ together 
with the divisibility conditions 
\[
\gcd \left( d_i,\frac{f(a)}{d_i}\right) \mid  \gcd (f(a),f'(a))\ 
\mbox{\rm \ and\ }\ \gcd \left( d'_i,\frac{f(b)}{d'_i}\right) \mid  \gcd (f(b),f'(b))
\]
imply that $d_i$ and $d'_i$ must be unitary divisors of $f(a)$ and $f(b)$, respectively, and this holds for each $i=1,\dots ,s$. The proof continues as in the case of 
Theorem \ref{thm0}, except that instead of $q_{s-1}$ and $q_k$ we have to use here $q^u_{s-1}$ and $q^u_{k}$, respectively, which are still greater than or equal to $1$, as $1$ belongs to both $\mathcal{D}_u(f(a))$ and $\mathcal{D}_u(f(b))$.
\end{proof}
One may rephrase Theorem \ref{thm0unitary} too in a more explicit form, as follows:
\begin{theorem}\label{thm0unitaryExplicit}
Let $f(X)=a_{0}+a_{1}X+\cdots +a_{n}X^{n}\in \mathbb{Z}[X]$, and assume 
that for two integers $a,b$ we have $0<|f(a)|<|f(b)|$ and 
$\gcd(f(a),f'(a))=\gcd(f(b),f'(b))=1$. Let $k$ be a positive integer and let $q^u_k$ be given by {\em (\ref{aldoileaq})}. Then $f$ is the product of at most $k$ irreducible factors over $\mathbb{Q}$ in each one of the following three cases:

\emph{i)} \ \thinspace $q^u_k>1$ and each root $\theta $ of $f$ satisfies $|\theta-\frac{a(q^u_k)^2-b}{(q^u_k)^2-1}|<q^u_k\frac{|b-a|}{(q^u_k)^2-1}$;

\emph{ii)} \thinspace $q^u_k>1$, each root $\theta $ of $f$ satisfies $|\theta -\frac{aq^u_k-b}{q^u_k-1}|<\sqrt{q^u_k}\frac{|b-a|}{q^u_k-1}$, and $f$ has no rational roots;

\emph{iii)} $q^u_k=1$ and either $b>a$ and all the roots of $f$ lie in 
the half-plane $x<\frac{a+b}{2}$, or $a>b$ and all the roots of $f$ lie in the half-plane $x>\frac{a+b}{2}$.
\end{theorem}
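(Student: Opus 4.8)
The plan is to deduce Theorem \ref{thm0unitaryExplicit} from Theorem \ref{thm0unitary} in exactly the same way that Theorem \ref{thm0Explicit} was deduced from Theorem \ref{thm0}, since the only difference between the two pairs is that the quantity $q_k$ is replaced throughout by $q^u_k$, and the geometric content of the statements is untouched. Concretely, the proof rests on translating the condition ``all the roots of $f$ lie inside the Apollonius circle ${\rm Ap}(a,b,q^u_k)$'' (respectively ${\rm Ap}(a,b,\sqrt{q^u_k})$) into an equivalent condition on the roots of a translated polynomial, and then simply reading off the center and radius from equation (\ref{Apollonius1}).

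First I would treat case i). By (\ref{Apollonius1}) with $r=q^u_k>1$, the Apollonius circle ${\rm Ap}(a,b,q^u_k)$ has center of abscissa $C_k^u=\frac{a(q^u_k)^2-b}{(q^u_k)^2-1}$ and radius $R_k^u=q^u_k\frac{|b-a|}{(q^u_k)^2-1}$. A point lies strictly inside this circle if and only if its distance to the center is less than $R_k^u$; applying this to each root $\theta$ of $f$ shows that ``all roots of $f$ lie inside ${\rm Ap}(a,b,q^u_k)$'' is equivalent to ``$|\theta-C_k^u|<R_k^u$ for every root $\theta$ of $f$,'' which is precisely the condition displayed in case i) of Theorem \ref{thm0unitaryExplicit}. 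Hence this hypothesis is interchangeable with the one in case i) of Theorem \ref{thm0unitary}, and the conclusion follows from that theorem. Equivalently, one may phrase this via the translated polynomial $f(X+C_k^u)$, whose roots are $\theta_i-C_k^u$, so that the condition becomes $\max_{1\le i\le n}|\theta_i-C_k^u|<R_k^u$.

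For case ii) the argument is identical, now with $r=\sqrt{q^u_k}$ in (\ref{Apollonius1}): the circle ${\rm Ap}(a,b,\sqrt{q^u_k})$ has center abscissa $\frac{a q^u_k-b}{q^u_k-1}$ and radius $\sqrt{q^u_k}\frac{|b-a|}{q^u_k-1}$, so requiring all roots of $f$ to lie inside it is the same as requiring each root $\theta$ to satisfy $|\theta-\frac{aq^u_k-b}{q^u_k-1}|<\sqrt{q^u_k}\frac{|b-a|}{q^u_k-1}$; together with the hypothesis that $f$ has no rational roots, case ii) of Theorem \ref{thm0unitary} gives the conclusion. Case iii) is already stated in terms of half-planes and requires no reformulation whatsoever; it is simply case iii) of Theorem \ref{thm0unitary} verbatim. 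There is no real obstacle here: the entire content is the elementary observation that (\ref{Apollonius1}) is the equation of the relevant circle, so that ``inside the circle'' unpacks to the stated inequality on $|\theta-C|$; the one minor point to keep in mind is the sign bookkeeping ensuring $(q^u_k)^2-1>0$ and $q^u_k-1>0$ when $q^u_k>1$, so that the radii are positive and the displayed strict inequalities genuinely describe the interior of a nondegenerate circle. I would therefore keep the proof to one or two sentences, closing with a reference back to Theorem \ref{thm0unitary}.
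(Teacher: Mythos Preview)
Your proposal is correct and matches the paper's own proof, which simply says ``The proof is similar to that of Theorem \ref{thm0Explicit}, with $q^u_k$ instead of $q_k$.'' You have just written out explicitly the routine translation via (\ref{Apollonius1}) that this one-liner encapsulates.
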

\begin{proof} \ The 
proof is similar to that of Theorem \ref{thm0Explicit}, with $q^u_k$ instead of $q_k$.
\end{proof}
We mention here that considerations as in Remark \ref{Remark1} apply in this case too, with $q^u_k$ instead of $q_k$.

\begin{theorem}\label{thm3} 
Let $f(X)=a_{0}+a_{1}X+\cdots +a_{n}X^{n}$ be a polynomial with integer 
coefficients, $M$ the maximum of the absolute values of its roots,
and assume that for two integers $a,b$ we have $0<|f(a)|<|f(b)|$ 
and $\gcd(f(a),f'(a))=\gcd(f(b),f'(b))=1$. Let also $q^u_k$ be given by relation {\em (\ref{aldoileaq})}.
Then $f$ is the product of at most $k$ irreducible factors over $\mathbb{Q}$ in each one of the following three cases:

\emph{i)} \ \thinspace \thinspace $|b|>q^u_k|a|+(1+q^u_k)M$; 

\emph{ii)} \thinspace \thinspace $|b|>\sqrt{q^u_k}|a|+(1+\sqrt{q^u_k})M$ and 
$f$ has no rational roots;

\emph{iii)} \thinspace $q^u_k=1$, $a^2<b^2$ and $M<\frac{|a+b|}{2}$.\end{theorem}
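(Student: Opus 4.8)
The plan is to mimic the proof of Theorem \ref{thm1} essentially verbatim, replacing $q_k$ by $q^u_k$ everywhere, and invoking the hypothesis $\gcd(f(a),f'(a))=\gcd(f(b),f'(b))=1$ exactly as in the proof of Theorem \ref{thm0unitary} to pass from arbitrary divisors to unitary divisors. First I would assume for contradiction that $f$ factors over $\mathbb{Q}$ into $s\ge k+1$ irreducible integer factors $f_1,\dots,f_s$, set $g_i=\prod_{j\neq i}f_j$, and observe that $f_i(a)$ is a divisor $d_i$ of $f(a)$ and $f_i(b)$ a divisor $d'_i$ of $f(b)$ satisfying $\gcd(d_i,f(a)/d_i)\mid\gcd(f(a),f'(a))=1$ and $\gcd(d'_i,f(b)/d'_i)\mid\gcd(f(b),f'(b))=1$; hence each $d_i\in\mathcal{D}_u(f(a))$ and each $d'_i\in\mathcal{D}_u(f(b))$. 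Since $\prod_{i=1}^s (d'_i/d_i)=f(b)/f(a)$, one quotient, say $|d'_1|/|d_1|$, is at most $\sqrt[s]{|f(b)|/|f(a)|}$, which gives $|f_1(b)|/|f_1(a)|\le q^u_{s-1}$, and writing $f_1(b)/f_1(a)=\prod_{j=1}^m (b-\theta_j)/(a-\theta_j)$ with $m=\deg f_1\ge1$, we get that for some $j$,
\[
\frac{|b-\theta_j|}{|a-\theta_j|}\le (q^u_{s-1})^{1/m}.
\]

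Next I would derive the contradiction in each of the three cases exactly as in Theorem \ref{thm1}. Note $q^u_k$ is decreasing in $k$ and $q^u_k\ge1$ for all $k$ (since $1$ is a unitary divisor of both $f(a)$ and $f(b)$), so $q^u_k\ge q^u_{s-1}\ge1$. In case i), from $|b|>q^u_k|a|+(1+q^u_k)M$ we estimate
\[
\frac{|b-\theta_j|}{|a-\theta_j|}\ge\frac{|b|-M}{|a|+M}>q^u_k\ge (q^u_k)^{1/m}\ge (q^u_{s-1})^{1/m},
\]
contradicting the displayed inequality. In case ii), since $f$ has no rational roots we have $m\ge2$, and $|b|>\sqrt{q^u_k}|a|+(1+\sqrt{q^u_k})M$ gives $\frac{|b-\theta_j|}{|a-\theta_j|}\ge\frac{|b|-M}{|a|+M}>\sqrt{q^u_k}\ge (q^u_k)^{1/m}\ge (q^u_{s-1})^{1/m}$, again a contradiction. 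In case iii), $q^u_k=1$ forces $q^u_{s-1}=1$, so $|b-\theta_j|\le|a-\theta_j|$, i.e. $(b-\mathrm{Re}(\theta_j))^2\le(a-\mathrm{Re}(\theta_j))^2$; but $a^2<b^2$ together with $M<\frac{|a+b|}{2}$ places the disk $\{z:|z|\le M\}$ strictly in the half-plane $x<\frac{a+b}{2}$ when $b>a$ (so $a+b>0$) and strictly in $x>\frac{a+b}{2}$ when $a>b$ (so $a+b<0$, and $-M>\frac{a+b}{2}$), contradicting the inequality on $\theta_j$.

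Since there is no real obstacle here — the argument is a transcription of the proof of Theorem \ref{thm1} with the single new input being that the coprimality hypotheses collapse admissible divisors to unitary divisors, which is precisely the step already carried out in the proof of Theorem \ref{thm0unitary} — I would simply write:

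\begin{proof} \ The proof follows the same lines as that of Theorem \ref{thm1}, the only difference being that the assumption $\gcd(f(a),f'(a))=\gcd(f(b),f'(b))=1$ forces the divisors $d_i$ of $f(a)$ and $d'_i$ of $f(b)$ occurring there to be unitary divisors, exactly as in the proof of Theorem \ref{thm0unitary}. Consequently, for at least one index $j\in\{1,\dots,m\}$ we obtain
\[
\frac{|b-\theta_j|}{|a-\theta_j|}\le (q^u_{s-1})^{1/m},
\]
and one derives a contradiction in each of the three cases by repeating word for word the corresponding argument in the proof of Theorem \ref{thm1}, with $q^u_k$ and $q^u_{s-1}$ in place of $q_k$ and $q_{s-1}$, using that $q^u_k\ge q^u_{s-1}\ge1$ since $1$ is a unitary divisor of both $f(a)$ and $f(b)$.
\end{proof}
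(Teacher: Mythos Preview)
Your proposal is correct and matches the paper's approach exactly: the paper's own proof is the single line ``One argues as in the proof of Theorem \ref{thm1}, with $q^u_k$ instead of $q_k$,'' and your write-up simply unpacks this, correctly invoking the coprimality hypothesis (as in Theorem \ref{thm0unitary}) to force the divisors to be unitary.
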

\begin{proof} \ One argues as in the proof of Theorem \ref{thm1}, with $q^u_k$ instead of $q_k$.
\end{proof}
In particular, we obtain the following result.
\begin{corollary} \label{coro3main} 
Let $f(X)=a_{0}+a_{1}X+\cdots +a_{n}X^{n}\in\mathbb{Z}[X]$ and $a$, $b$ two integers such that $a^2<b^2$ and
$|a_{n}|>\sum_{i=0}^{n-1}|a_{i}|(\frac{|a+b|}{2}) ^{i-n}$. Assume that $|f(a)|=p^{k_1}$, $|f(b)|=p^{k_2}r^j$ with $p,r$ distinct prime numbers and $k_{1},k_{2}$ positive integers such $r^j>p^{k_1-k_2}$. Assume also that $p\nmid f'(a)f'(b)$ and $r\nmid f'(b)$. Then $f$ is the product of at most $k$ irreducible factors over $\mathbb{Q}$ in each one of the following cases: 

\emph{i)} $k_1<k_2$, $r^j<p^{k_1}$, $r^j<p^{k_2-k_1}$, and $k=1+\lfloor \frac{k_2-k_1}{j}\log _{r}{p}\rfloor $;

\emph{ii)} $k_1<k_2$, $r^j>p^{k_1}$, $r^j<p^{k_2-k_1}$, and $k=1+\lfloor \frac{k_2\log _rp}{j-k_1\log _rp}\rfloor $;

\emph{iii)} $k_1=k_2$, $r^j>p^{2k_1}$, and $k=\lfloor \frac{j}{k_1}\log _{p}{r}\rfloor $;

\emph{iv)} $k_1=k_2$, $p^{2k_1}>r^j>p^{k_1}$, and $k=1+\lfloor \frac{k_1\log _rp}{j-k_1\log _rp}\rfloor $;

\emph{v)} $k_1=k_2$, $r^j<p^{k_1}$, and $k=1$ ($f$ is irreducible);

\emph{vi)} $k_1>k_2$, $r^j<p^{k_1}$, and $k=1$ ($f$ is irreducible);

\emph{vii)} $k_1>k_2$, $r^j>p^{k_1+k_2}$, and $k=1+\lfloor \frac{j\log _pr-k_1}{k_2}\rfloor $;

\emph{viii)} $k_1>k_2$, $p^{k_1}<r^j<p^{k_1+k_2}$, and $k=1+\lfloor \frac{k_2\log _rp}{j-k_1\log _rp}\rfloor $.

\end{corollary}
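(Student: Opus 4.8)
The proof proceeds exactly along the lines of Corollaries \ref{coro1main}, \ref{coro1main'} and \ref{coro1main''}: the hypothesis $|a_{n}|>\sum_{i=0}^{n-1}|a_{i}|(\frac{|a+b|}{2})^{i-n}$ forces $M<\frac{|a+b|}{2}$ by Rouch\'e's theorem, and together with $a^2<b^2$ this verifies condition iii) of Theorem \ref{thm3}. The assumptions $p\nmid f'(a)f'(b)$ and $r\nmid f'(b)$ guarantee that $\gcd(f(a),f'(a))=\gcd(f(b),f'(b))=1$, so Theorem \ref{thm3} iii) applies and it suffices, in each of the eight cases, to exhibit the indicated $k$ for which $q^u_k=1$. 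Since $\mathcal{D}_u(p^{k_1})=\{1,p^{k_1}\}$ (up to sign) and $\mathcal{D}_u(p^{k_2}r^j)=\{1,p^{k_2},r^j,p^{k_2}r^j\}$, we have
\[
q^u_k=\max\left\{\frac{d_2}{d_1}\le\sqrt[k+1]{\frac{|f(b)|}{|f(a)|}}:d_1\in\{1,p^{k_1}\},\ d_2\in\{1,p^{k_2},r^j,p^{k_2}r^j\}\right\},
\]
and the game reduces to a finite enumeration: the candidate quotients exceeding $1$ are among
\[
p^{k_2},\quad r^j,\quad p^{k_2}r^j,\quad \frac{p^{k_2}}{p^{k_1}}=p^{k_2-k_1},\quad \frac{r^j}{p^{k_1}},\quad \frac{p^{k_2}r^j}{p^{k_1}}=p^{k_2-k_1}r^j,
\]
the last three being relevant only when they are $>1$. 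To show $q^u_k=1$ one must check that \emph{every} such quotient that exceeds $1$ also exceeds $\sqrt[k+1]{|f(b)|/|f(a)|}=\sqrt[k+1]{p^{k_2-k_1}r^j}$; equivalently, that the \emph{smallest} quotient exceeding $1$ is already $>\sqrt[k+1]{p^{k_2-k_1}r^j}$.

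**The case split.** The eight cases in the statement are precisely the eight sign/size configurations of the data that determine which of the six quotients above is the smallest one exceeding $1$. For instance, in case i) ($k_1<k_2$, $r^j<p^{k_1}$, $r^j<p^{k_2-k_1}$) the quotient $r^j/p^{k_1}<1$ while $p^{k_2-k_1}>1$, and one checks that $p^{k_2-k_1}$ — no wait, the competitor $r^j$ itself satisfies $1<r^j<p^{k_2-k_1}\le p^{k_2-k_1}r^j$, so among quotients $>1$ the candidate $r^j$ is dominated; a short comparison shows the minimal quotient $>1$ is $r^j$ when $r^j>1$, forcing the bound $r^j>\sqrt[k+1]{p^{k_2-k_1}r^j}$, i.e. $r^{j(k)}>p^{k_2-k_1}$, i.e. $jk>(k_2-k_1)\log_r p$, yielding $k=1+\lfloor\frac{k_2-k_1}{j}\log_r p\rfloor$. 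In case iii) ($k_1=k_2$, $r^j>p^{2k_1}$) every candidate quotient $>1$ — namely $p^{k_2},r^j,p^{k_2}r^j,r^j/p^{k_1},p^{k_2}r^j/p^{k_1}$ — has $r^j/p^{k_1}$ as its smallest member and $r^j/p^{k_1}>\sqrt[k+1]{r^j}$ reduces to $r^{jk}>p^{k_1(k+1)}$, i.e. $k>\frac{k_1}{j\log_p r-k_1}\cdot$(something); carefully, $r^j/p^{k_1}>\sqrt[k+1]{r^j}$ is $(r^j)^{k}>p^{k_1(k+1)}$, which after taking $\log_p$ gives $k(j\log_p r-k_1)>k_1$; since $r^j>p^{2k_1}$ forces $j\log_p r>2k_1>k_1$, here $j\log_p r-k_1>k_1>0$ so in fact $k=1$ already works unless — on re-examination the stated answer $k=\lfloor\frac{j}{k_1}\log_p r\rfloor$ comes from comparing instead against the quotient $p^{k_2}=p^{k_1}$ and requiring $p^{k_1}>\sqrt[k+1]{r^j}$, i.e. $(k+1)k_1\log_p r<j$ wait that's backwards; the correct reading is that one must have $p^{k_1}\le\sqrt[k+1]{r^j}$ \emph{fail to be a valid candidate}, i.e. $p^{k_1}>\sqrt[k+1]{p^{2k_1}r^j}$ is false, so $p^{k_1}$ is admitted, hence $q^u_k\ge p^{k_1}>1$ unless $k$ is small enough that $p^{k_1}$ exceeds $\sqrt[k+1]{p^{2k_1}r^j}$. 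I will in the write-up treat each case by this same template: identify the smallest quotient $Q>1$ among the six, impose $Q>\sqrt[k+1]{p^{k_2-k_1}r^j}$, solve for $k$, and read off the floor expression.

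**The main obstacle.** The real work — and the only place errors can creep in — is the bookkeeping in the eight-way case analysis: for each configuration one must correctly identify which of the six quotients is the smallest one that is strictly greater than $1$ (this depends delicately on the inequalities among $p^{k_1}$, $p^{k_2}$, $r^j$, and their products stated in the hypotheses of each case), and then correctly manipulate the resulting inequality $Q>\sqrt[k+1]{p^{k_2-k_1}r^j}$ into the advertised floor formula, paying attention to the direction of inequalities when taking logarithms to base $r$ versus base $p$ (and to the sign of quantities such as $j-k_1\log_r p$, which the case hypotheses are designed to control — e.g. $r^j>p^{k_1}$ is exactly $j>k_1\log_r p$). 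The non-integrality remarks (that the threshold is never an integer, so the floor is safe) should be noted as in Corollary \ref{coro1main}, using that $p$ and $r$ are distinct primes. Once the smallest-quotient identification is pinned down in each case, the rest is the routine algebra of logarithms, and the conclusion follows from Theorem \ref{thm3} iii) in every case (in cases v) and vi), where $q^u_1=1$ already, Theorem \ref{thm3} iii) with $k=1$ gives irreducibility outright).
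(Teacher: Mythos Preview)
Your approach is exactly the paper's: reduce to Theorem~\ref{thm3}~iii) via Rouch\'e, list the eight candidate quotients $d_2/d_1$ with $d_1\in\{1,p^{k_1}\}$ and $d_2\in\{1,p^{k_2},r^j,p^{k_2}r^j\}$, and in each case identify the smallest quotient $s_i>1$ and impose $s_i^{k+1}>p^{k_2-k_1}r^j$. The paper records these as $s_1=r^j$, $s_2=r^j/p^{k_1}$, $s_3=p^{k_1}$, $s_4=r^j/p^{k_1}$, $s_5=r^j$, $s_6=p^{k_2-k_1}r^j$, $s_7=p^{k_2}$, $s_8=r^j/p^{k_1}$, and the floor formulas drop out immediately.

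Your scratch work in case~iii) is where things go off the rails, so let me pin it down. With $k_1=k_2$ and $r^j>p^{2k_1}$, the quotients exceeding $1$ are $p^{k_1}$, $r^j/p^{k_1}$, $r^j$, $p^{k_1}r^j$; the hypothesis $r^j>p^{2k_1}$ gives $r^j/p^{k_1}>p^{k_1}$, so the smallest is $p^{k_1}$, not $r^j/p^{k_1}$ as you first wrote. The required inequality is then $p^{k_1(k+1)}>r^j$, i.e.\ $k_1(k+1)>j\log_p r$, i.e.\ $k>\frac{j}{k_1}\log_p r-1$, whence $k=\lfloor\frac{j}{k_1}\log_p r\rfloor$ (your attempt ``$(k+1)k_1\log_p r<j$'' mixed up which side the logarithm lands on). Once you straighten this out and run the template cleanly through all eight cases, your write-up will coincide with the paper's proof.
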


\begin{proof} \ As in the proof of 
Corollary \ref{coro1main} we have $M<\frac{|a+b|}{2} $. Note that $\mathcal{D}_u(f(a))=\{ 1,p^{k_1}\} $ and 
$\mathcal{D}_u(f(b))=\{ 1,p^{k_2},r^{j},p^{k_2}r^{j}\} $, so any quotient 
$\frac{d_{2}}{d_{1}}$ with $d_{1}\in \mathcal{D}_u(f(a))$ and $d_{2}\in \mathcal{D}_u(f(b))$ belongs to the set
\[
S=\left\{ 1,\frac{1}{p^{k_1}},p^{k_2},p^{k_2-k_1},r^j,\frac{r^j}{p^{k_1}},p^{k_2}r^j,p^{k_2-k_1}r^j\right\} .
\]
In each case $i$ in the statement denote by $s_i$ the least element of $S$ that exceeds 1. One may check that $s_1=r^j$, $s_2=\frac{r^j}{p^{k_1}}$, $s_3=p^{k_1}$, $s_4=\frac{r^j}{p^{k_1}}$, $s_5=r^j$, $s_6=p^{k_2-k_1}r^j$, $s_7=p^{k_2}$ and $s_8=\frac{r^j}{p^{k_1}}$. The values of $k$ in the statement are then obtained by imposing the conditions $s_i>\sqrt[k+1]{\frac{|f(b)|}{|f(a)|}}$, that is $s_i^{k+1}>p^{k_2-k_1}r^j$, thus forcing $q^u_k$ to be equal to $1$ in each one of the eight cases. 
The conclusion follows from Theorem \ref{thm3}. 
\end{proof}

\section{The case of multivariate polynomials}  \label{se4}
In this section we will prove some similar results for
bivariate polynomials $f(X,Y)$ over an arbitrary field $K$. Generalizations for polynomials $f(X_{1},\dots ,X_{r})$ in $r\geq 3$ variables may be then obtained from the results in the bivariate case, by writing $Y$ for $X_{r}$, $X$ for $X_{r-1}$, and by replacing
$K$ with $K(X_{1},\dots ,X_{r-2})$. We will recall the definition used in \cite{Apollonius} for the admissible divisors in the bivariate case.
\begin{definition}\label{admissible2}
Let $K$ be a field, $f(X,Y)\in K[X,Y]$ and $a(X)\in K[X]$ such that 
$f(X,a(X))\neq 0$. We say that a polynomial $D(X)\in K[X]$ is an 
{\it admissible divisor} of $f(X,a(X))$ if $D(X)\mid f(X,a(X))$ and 
\begin{equation}\label{adm2var}
\gcd \left( D(X),\frac{f(X,a(X))}{D(X)}\right) \mid \gcd 
\left( f(X,a(X)),\frac{\partial f}{\partial Y}(X,a(X))\right) .
\end{equation}
The set of admissible divisors of 
$f(X,a(X))$ will be denoted by $D_{ad}(f(X,a(X))$, and for $f(X,Y)$ and $a(X)$ as above we will denote 
\[
D_{u}(f(X,a(X)))=\left\{ d\in K[X]:d(X)|f(X,a(X)),\ \gcd\left( d(X),\frac{f(X,a(X))}{d(X)}\right) =1\right\} ,
\]
and call it the set of {\it unitary divisors} of $f(X,a(X))$. Notice that 
in the particular case that 
$\gcd (f(X,a(X)),\frac{\partial f}{\partial Y}(X,a(X)))=1$, 
$D_{ad}(f(X,a(X))$ reduces to $\mathcal{D}_u(f(X,a(X)))$.
\end{definition}
With this definition, we have the following result.
\begin{theorem}\label{thm5}
Let $K$ be a field, $f(X,Y)=a_{0}(X)+a_{1}(X)Y+\cdots +a_{n}(X)Y^{n}\in K[X,Y]$, 
with $a_{0},\dots ,a_{n}\in K[X]$, $a_{0}a_{n}\neq 0$, and $k$ a positive integer. Assume that for two polynomials $a(X),b(X)\in K[X]$ we have 
$f(X,a(X))f(X,b(X))\neq 0$ and 
$\Delta _k:=\frac{1}{k+1}\cdot (\deg f(X,b(X))-\deg f(X,a(X)))\geq 0$, and let
\[
q_k=\max \{ \deg d_{2}-\deg d_{1}\leq \Delta _k :d_{1}\in D_{ad}(f(X,a(X))),d_{2}\in D_{ad}(f(X,b(X))) \}.
\]
If $\deg b(X)>\max \{ \deg a(X),\max \limits_{0\leq i\leq n-1 }
\frac{\deg a_{i}-\deg a_{n}}{n-i} \}+q_k$, then $f(X,Y)$ is the product of at most $k$ irreducible factors over $K(X)$.
\end{theorem}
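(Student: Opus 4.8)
The plan is to mimic the proof of Theorem \ref{thm0} and Theorem \ref{thm1}, working in the non-Archimedean setting where the degree function on $K[X]$ plays the role of the logarithm of the absolute value. First I would assume, for contradiction, that $f(X,Y)$ factors over $K(X)$ into $s\geq k+1$ irreducible factors; by Gauss's lemma applied to the UFD $K[X]$, I may take these factors $f_1,\dots,f_s$ in $K[X][Y]$, primitive as polynomials in $Y$, so that $f=f_1\cdots f_s$ with each $\deg_Y f_i\geq 1$. Evaluating at $Y=a(X)$ and at $Y=b(X)$, and using the product rule $\frac{\partial f}{\partial Y}=\sum_i f_i'(X,Y)\prod_{j\neq i}f_j(X,Y)$ exactly as in the integer case, I would check that each $f_i(X,a(X))$ is an admissible divisor $D_i$ of $f(X,a(X))$ and each $f_i(X,b(X))$ is an admissible divisor $D_i'$ of $f(X,b(X))$, via the divisibility condition on the gcd of $D_i$ and $f(X,a(X))/D_i$ dividing $\gcd(f(X,a(X)),\frac{\partial f}{\partial Y}(X,a(X)))$.

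Next, from the identity $\prod_{i=1}^s \frac{D_i'}{D_i}=\frac{f(X,b(X))}{f(X,a(X))}$ I take degrees: $\sum_{i=1}^s(\deg D_i'-\deg D_i)=\deg f(X,b(X))-\deg f(X,a(X))$, so by averaging there is an index, say $i=1$, with $\deg D_1'-\deg D_1\leq \frac{1}{s}(\deg f(X,b(X))-\deg f(X,a(X)))\leq \Delta_{s-1}$. This shows $\deg f_1(X,b(X))-\deg f_1(X,a(X))\leq q_{s-1}\leq q_k$, the last inequality because $q_k$ is nonincreasing in $k$ (larger $k$ shrinks the admissible cutoff $\Delta_k$) and $s-1\geq k$. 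Writing $f_1(X,Y)=c(X)\prod_{\ell=1}^m (Y-\theta_\ell(X))$ with $m=\deg_Y f_1\geq 1$ and the $\theta_\ell$ in an algebraic closure of $K(X)$, I get $f_1(X,b(X))/f_1(X,a(X))=\prod_{\ell=1}^m\frac{b(X)-\theta_\ell(X)}{a(X)-\theta_\ell(X)}$, so for at least one index $\ell$ the degree (i.e.\ the valuation $-v_\infty$) of $\frac{b-\theta_\ell}{a-\theta_\ell}$ is at most $\frac{1}{m}q_k$.

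The contradiction then comes from the hypothesis $\deg b(X)>\max\{\deg a(X),\max_{0\leq i\leq n-1}\frac{\deg a_i-\deg a_n}{n-i}\}+q_k$. The second term in the max is exactly (a bound for) the maximal degree of a root $\theta_\ell(X)$ of $f$ over $K(X)$ — this is the non-Archimedean analogue of the Cauchy/Lagrange bound $M$ on the moduli of the roots, obtained by the standard Newton-polygon or $v_\infty$-valuation argument on $a_0+a_1Y+\cdots+a_nY^n$. Hence $\deg b(X)>\deg\theta_\ell(X)$ and $\deg b(X)>\deg a(X)$, which forces $\deg(b-\theta_\ell)=\deg b$ and $\deg(a-\theta_\ell)\leq \max\{\deg a,\deg\theta_\ell\}<\deg b$; therefore $\deg(b-\theta_\ell)-\deg(a-\theta_\ell)>\deg b-\max\{\deg a,\deg\theta_\ell\}$. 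Combining with $\deg b>\max\{\deg a,\deg\theta_\ell\}+q_k$ gives $\deg(b-\theta_\ell)-\deg(a-\theta_\ell)>q_k\geq \frac{1}{m}q_k$ (since $q_k\geq 0$ and $m\geq 1$), contradicting the previous paragraph. This proves $f(X,Y)$ is the product of at most $k$ irreducible factors over $K(X)$.

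The main obstacle is bookkeeping rather than conceptual: one must carefully justify that $q_k$ is finite and well-defined (the set over which the max is taken is nonempty since $D_1=D_2=1$ give difference $0\leq \Delta_k$, and it is a set of integers bounded above by $\Delta_k$), verify the monotonicity $q_{s-1}\leq q_k$ cleanly, and make precise the claim that $\max_{0\leq i\leq n-1}\frac{\deg a_i-\deg a_n}{n-i}$ bounds $\deg\theta_\ell(X)$ for every root — the latter is the exact non-Archimedean counterpart of a Fujiwara-type root bound and should be stated as a short lemma (or cited) before the main argument. The inequality manipulations with degrees replacing absolute values are routine once the ultrametric inequality $\deg(u+v)\leq\max\{\deg u,\deg v\}$, with equality when the degrees differ, is invoked in place of the triangle inequality used in the proof of Theorem \ref{thm1}.
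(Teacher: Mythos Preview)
Your proposal is correct and follows essentially the same strategy as the paper's proof: the contradiction setup, the admissible-divisor verification via the product rule, the averaging step yielding $\deg f_1(X,b(X))-\deg f_1(X,a(X))\le q_{s-1}\le q_k$, the Fujiwara-type bound $\deg\theta_\ell\le\max_{0\le i\le n-1}\frac{\deg a_i-\deg a_n}{n-i}$ on the roots, and the final degree comparison are all exactly what the paper does. The only cosmetic difference is that the paper packages the degree function as a non-Archimedean absolute value $|A|=\rho^{\deg A}$ and obtains the contradiction by taking $\rho$ sufficiently large, whereas you work directly with degrees and the ultrametric inequality---your route is slightly more direct but the two are equivalent.
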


\begin{proof} \ Note that once we fix the polynomials $a(X)$ and $b(X)$ as in the statement of the theorem, $q_k$ is a decreasing function on $k$. Moreover, since $\Delta _k\geq 0$ and $1$ is obviously an admissible divisor 
of both $f(X,a(X))$ and $f(X,b(X))$, a possible candidate for $q_{k}$ is $0=\deg 1-\deg 1$, so $q_{k}\geq 0$ for each $k$.  Assume towards a contradiction that one may write $f$ as a product of $s\geq k+1$ factors $f_1,\dots ,f_s\in K[X,Y]$ that are irreducible over $K(X)$. In particular, we have \begin{equation}\label{qkqs}
q_k\geq q_{s-1}. 
\end{equation}
Let us fix an index $i\in\{ 1,\dots ,s\} $ and denote
\[
g_i(X,Y):=\prod\limits_ {j\neq i}f_j(X,Y).
\]
We may therefore write $f=f_i\cdot g_i$ for each $i=1,\dots ,s$. Next, since 
\begin{eqnarray*}
f(X,a(X)) & = & f_i(X,a(X))g_i(X,a(X))\neq 0\\
f(X,b(X)) & = & f_i(X,b(X))g_i(X,b(X))\neq 0
\end{eqnarray*}
and
\begin{eqnarray*}
\frac{\partial{f}}{\partial{Y}}(X,a(X)) & = & 
\frac{\partial{f_i}}{\partial{Y}}(X,a(X))g_i(X,a(X))+f_i(X,a(X))\frac{\partial{g_i}}{\partial{Y}}(X,a(X)),\\
\frac{\partial{f}}{\partial{Y}}(X,b(X)) & = & 
\frac{\partial{f_i}}{\partial{Y}}(X,b(X))g_i(X,b(X))+f_i(X,b(X))\frac{\partial{g_i}}{\partial{Y}}(X,b(X)),
\end{eqnarray*}
we see that $f_i(X,a(X))$ is a divisor $d_{i}(X)$ of $f(X,a(X))$, and $f_i(X,b(X))$ is a divisor 
$d'_{i}(X)$ of $f(X,b(X))$
that must also satisfy the divisibility conditions
\begin{eqnarray*}
\gcd \left( d_i(X),\frac{f(X,a(X))}{d_i(X)}\right) & \mid & \gcd 
\left( f(X,a(X)),\frac{\partial f}{\partial Y}(X,a(X))\right) \ \mbox{\rm and}\\ 
\gcd \left( d'_i(X),\frac{f(X,b(X))}{d'_i(X)}\right) & \mid & \gcd 
\left( f(X,b(X)),\frac{\partial f}{\partial Y}(X,b(X))\right) . 
\end{eqnarray*}
Therefore $d_i(X)$ and $d'_i(X)$ are admissible divisors of $f(X,a(X))$ and $f(X,b(X))$, 
respectively, and this holds for each $i=1,\dots ,s$. Observe now that
\begin{equation}\label{produsderapoarte}
\frac{d'_1(X)}{d_1(X)}\cdots \frac{d'_s(X)}{d_s(X)}=\frac{f(X,b(X))}{f(X,a(X))}.
\end{equation}
At this point we will introduce as in \cite{Apollonius} a nonarchimedean
absolute value $|\cdot|$ on $K(X)$, as follows. We choose an arbitrary real number
$\rho>1$, and for any polynomial $A(X)\in K[X]$ we define $|A(X)|$ by
\[
|A(X)|=\rho^{\deg A(X)}.
\]
Then we extend the absolute value $|\cdot|$ to $K(X)$ by multiplicativity,
that is, for any polynomials $A(X),B(X)\in K[X]$ with $B(X)\neq 0$, we let   
$\left| \frac{A(X)}{B(X)} \right|=\frac{|A(X)|}{|B(X)|}$.  
We note here that for any non-zero element $A(X)$ of $K[X]$ one has $|A|\geq 1$.
Finally, let $\overline{K(X)}$ be a fixed algebraic closure of $K(X)$, and let us fix an
extension of our absolute value $|\cdot|$ to $\overline{K(X)}$, which we will
also denote by $|\cdot|$.

Applying now our absolute value to relation (\ref{produsderapoarte}), we deduce that
one of the quotients $\frac{|d'_1(X)|}{|d_1(X)|},\dots ,\frac{|d'_s(X)|}{|d_s(X)|}$, 
say $\frac{|d'_1(X)|}{|d_1(X)|}$, must be less than or equal to $\sqrt[s]{\frac{|f(X,b(X))|}{|f(X,a(X))|}}$. In particular, we must have $\deg f_1(X,b(X))-\deg f_1(X,a(X))\leq \frac{1}{s}(\deg f(X,b(X))-\deg f(X,a(X)))$, so
$\deg f_1(X,b(X))-\deg f_1(X,a(X))\leq q_{s-1}$, or equivalently,
\begin{equation}\label{qmultiv}
\frac{|f_1(X,b(X))|}{|f_1(X,a(X))|}\leq \rho ^{q_{s-1}}.
\end{equation}

Let us assume that $f$ as a polynomial in $Y$ with coefficients in $K[X]$ factorizes as 
\[
f(X,Y)=a_{n}(X)(Y-\xi _{1})\cdots (Y-\xi _{n})
\]
for some $\xi _{1},\dots ,\xi _{n}\in \overline{K(X)}$. 
Next, we will prove that 
\begin{equation}\label{grade}
\max \{ |\xi _{1}|,\dots ,|\xi _{n}|  \} \leq \rho ^ {\ \max 
\limits_{0\leq i\leq n-1 }\frac{\deg a_{i}-\deg a_{n}}{n-i}}.
\end{equation}
To this end, let $\delta :=\max \limits_{0\leq i\leq n-1 }
\frac{\deg a_{i}-\deg a_{n}}{n-i}$, and let us assume to the contrary 
that $f$ has a root $\xi $ with $|\xi |> \rho ^{\delta }$. Since 
$\xi \ne 0$ and our absolute value also satisfies the triangle inequality, we successively deduce that
\begin{eqnarray*}
0=\left| \sum\limits _{i=0}^{n}a_{i}\xi ^{i-n}\right| & \geq & 
|a_{n}|- \left| \sum\limits _{i=0}^{n-1}a_{i}\xi ^{i-n}\right|
\geq |a_{n}|- \max\limits _{0\leq i\leq n-1}|a_{i}|\cdot |\xi |^{i-n}\\
& > & |a_{n}|- \max\limits _{0\leq i\leq n-1}|a_{i}|\cdot \rho^{(i-n)\delta },
\end{eqnarray*}
so $|a_{n}|< \max\limits _{0\leq i\leq n-1}|a_{i}|\cdot \rho^{(i-n)\delta }$, 
or equivalently
\begin{equation}\label{gradenou}
\deg a_{n}< \max\limits _{0\leq i\leq n-1}\{ \deg a_{i}+(i-n)\delta \} .
\end{equation}
Let us choose now an index $k\in \{ 0,\dots , n-1\} $ for which the maximum 
in (\ref{gradenou}) is attained. In particular, we have
$\deg a_{n}< \deg a_{k}+(k-n)\delta $,
which leads us to
\[
\frac{\deg a_{k}-\deg a_{n}}{n-k}>\delta =\max \limits_{0\leq i\leq n-1 }\frac{\deg a_{i}-\deg a_{n}}{n-i},
\]
a contradiction. Therefore inequality (\ref{grade}) must hold, so 
$|\xi _{i}|\leq\rho ^{\delta }$ for $i=1,\dots ,n$.

Assume now without loss of generality that $f_1(X,Y)=b_{m}(Y-\xi _1)\cdots (Y-\xi _{m})$ for some $m\geq 1$ and some divisor $b_{m}$ of $a_n$. Notice that we may write
\[
\frac{f_1(X,b(X))}{f_1(X,a(X))}=\frac{b(X)-\xi_1}{a(X)-\xi_1}\cdots 
\frac{b(X)-\xi_{m}}{a(X)-\xi_{m}},
\]
so by (\ref{qmultiv}) we see that for at least one index $j\in \{ 1,\dots ,m\} $ 
we must have
\begin{equation}\label{radicalmultiv}
\frac{|b(X)-\xi_j|}{|a(X)-\xi_j|}\leq \rho ^{\frac{q_{s-1}}{m}}.
\end{equation}
On the other hand, our absolute value also satisfies the triangle 
inequality, so
\[
\frac{|b(X)-\xi_j|}{|a(X)-\xi_j|}\geq \frac{|b(X)|-|\xi_j|}{|a(X)|+|\xi_j|}
\geq \frac{|b(X)|-\rho ^{\delta }}{|a(X)|+\rho ^{\delta }}
=\frac{\rho ^{\deg b(X)}-\rho ^{\delta }}{\rho ^{\deg a(X)}+\rho ^{\delta }}.
\]
All that remains now is to prove that for a sufficiently large $\rho $ we have
\[
\frac{\rho ^{\deg b(X)}-\rho ^{\delta }}{\rho ^{\deg a(X)}+\rho ^{\delta }}>
\rho ^{q_{s-1}}\geq \rho ^{\frac{q_{s-1}}{m}},
\]
which will contradict (\ref{radicalmultiv}). Here the right-most inequality 
$\rho ^{q_{s-1}}\geq \rho ^{\frac{q_{s-1}}{m}}$ obviously holds for an arbitrary
$\rho >1$ since $q_{s-1}\geq 0$ and $m=\deg _{Y}f_1\geq 1$. The first inequality is equivalent to
\[
\rho ^{\deg b(X)}>\rho ^{q_{s-1}+\deg a(X)}+\rho ^{q_{s-1}+\delta }+\rho ^{\delta },
\]
which will obviously hold for a sufficiently large $\rho $, since by our assumption on the magnitude of $\deg b(X)$ we have 
$\deg b(X)>\max \{ q_k+\deg a(X), q_k+\delta ,\delta \} $, and $q_k\geq q_{s-1}$, according to (\ref{qkqs}). Therefore one may write $f$ as a product of at most $k$ irreducible factors over $K(X)$,
and this completes the proof of the theorem.  
\end{proof}

In particular, for $a(X)=0$ and $b(X)$ denoted by $g(X)$, we obtain from Theorem \ref{thm5} 
the following result:
\begin{corollary}\label{coro6} 
Let $K$ be a field, $f(X,Y)=a_{0}(X)+a_{1}(X)Y+\cdots +a_{n}(X)Y^{n}\in K[X,Y]$, 
with $a_{0},a_{1},\dots ,a_{n}\in K[X]$, $a_0a_n\neq 0$, $a_0\in K$ and
\[
\deg a_{n-1}\geq \deg a_{n} \geq\max \{ \deg a_{0},\deg a_{1},\dots ,\deg a_{n-2}\} .
\]
If $f(X,g(X))=h(X)^k$ for $g,h\in K[X]$ with $\deg g>\deg a_{n-1}-\deg a_n$, $h$ irreducible and $k$ a positive integer, then $f(X,Y)$ is the product of at most $k$ irreducible factors over $K(X)$.
\end{corollary}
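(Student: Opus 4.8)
The plan is to deduce Corollary \ref{coro6} as a direct specialization of Theorem \ref{thm5}. First I would set $a(X)=0$ and $b(X)=g(X)$, and observe that the hypotheses translate as follows: $f(X,a(X))=f(X,0)=a_0(X)=a_0\in K^*$, so $\deg f(X,a(X))=0$, while $f(X,b(X))=f(X,g(X))=h(X)^k$ with $h$ irreducible of some degree $e\geq 1$, so $\deg f(X,b(X))=ke$. Hence $\Delta_k=\frac{1}{k+1}(ke-0)=\frac{ke}{k+1}\geq 0$, so the nonnegativity hypothesis of Theorem \ref{thm5} holds.

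Next I would compute $q_k$ for this choice. The admissible (indeed, all) divisors of $f(X,a(X))=a_0\in K^*$ are just the units of $K[X]$, all of degree $0$; so $\deg d_1=0$ always. The divisors of $f(X,b(X))=h(X)^k$ are, up to units, the powers $h(X)^t$ for $0\le t\le k$, of degree $te$; but since $h$ is irreducible and the factorization $h^k$ is its canonical decomposition, for $0<t<k$ we have $\gcd(h^t,h^{k-t})=h^{\min(t,k-t)}\neq 1$, while $\gcd(h^t,h^{k-t})$ must divide $\gcd(f(X,b(X)),\frac{\partial f}{\partial Y}(X,b(X)))$. Whether or not that gcd is trivial, the only divisors of $h^k$ of degree strictly between $0$ and $ke$ that can be admissible are those $h^t$ with $h^{\min(t,k-t)}$ dividing the aforementioned gcd; in any case, the candidates for $\deg d_2-\deg d_1$ that do not exceed $\Delta_k=\frac{ke}{k+1}<e$ are exactly $\deg d_2-\deg d_1=0$ (take $d_1=d_2=1$), since any nonzero value $\deg d_2-\deg d_1$ is at least $e>\Delta_k$. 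Therefore $q_k=0$.

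With $q_k=0$, the degree condition in Theorem \ref{thm5} becomes
\[
\deg b(X)>\max\Bigl\{\deg a(X),\ \max_{0\le i\le n-1}\frac{\deg a_i-\deg a_n}{n-i}\Bigr\}+0.
\]
Here $\deg a(X)=\deg 0=-\infty$, which is harmless, and under the hypothesis $\deg a_{n-1}\geq \deg a_n\geq\max\{\deg a_0,\dots,\deg a_{n-2}\}$ the inner maximum $\max_{0\le i\le n-1}\frac{\deg a_i-\deg a_n}{n-i}$ is attained at $i=n-1$ and equals $\deg a_{n-1}-\deg a_n$: indeed for $i\le n-2$ we have $\deg a_i-\deg a_n\le 0$ and $n-i\ge 2$, so those quotients are $\le 0\le\deg a_{n-1}-\deg a_n$. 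Thus the condition reduces exactly to $\deg g>\deg a_{n-1}-\deg a_n$, which is precisely the hypothesis of the corollary. Applying Theorem \ref{thm5} yields that $f(X,Y)$ is the product of at most $k$ irreducible factors over $K(X)$.

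The only delicate point—and the one I would write out carefully—is the justification that $q_k=0$: one must make sure that no admissible divisor pair $(d_1,d_2)$ produces a positive degree difference $\le\Delta_k$. Since $\Delta_k=\frac{ke}{k+1}$ is strictly less than $e=\deg h$, and every divisor of $a_0\cdot h^k$ in $K[X]$ has degree a nonnegative multiple of $e$ (namely $te$ for the $h^t$ part, times a unit), any pair with $\deg d_2-\deg d_1>0$ has degree difference $\ge e>\Delta_k$, hence is excluded by the constraint in the definition of $q_k$; the only surviving candidate is $\deg d_2-\deg d_1=0$, giving $q_k=0$. No subtlety about which divisors are \emph{admissible} is even needed, because the degree bound already kills all nonzero options. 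This makes the deduction clean and self-contained given Theorem \ref{thm5}.
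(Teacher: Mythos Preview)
Your proof is correct and follows essentially the same route as the paper's: specialize Theorem~\ref{thm5} with $a(X)=0$, $b(X)=g(X)$, note $\Delta_k=\frac{k}{k+1}\deg h<\deg h$, observe that every positive value of $\deg d_2-\deg d_1$ is a positive multiple of $\deg h$ and hence exceeds $\Delta_k$, so $q_k=0$, and then reduce the degree condition to $\deg g>\deg a_{n-1}-\deg a_n$ using the chain of degree inequalities on the $a_i$. Your explicit justification that the maximum $\max_{0\le i\le n-1}\frac{\deg a_i-\deg a_n}{n-i}$ is attained at $i=n-1$, and your closing remark that the admissibility restriction is irrelevant here since the degree constraint alone forces $q_k=0$, are both accurate and make the argument slightly more self-contained than the paper's version.
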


\begin{proof} \  \ We apply Theorem \ref{thm5} 
with $a(X)=0$ and $b(X)=g(X)$. Since $f(X,a(X))=a_0\in K\setminus \{0\}$ we have $\deg f(X,a(X))=0$, and since $f(X,b(X))=h(X)^k$ with $h$ irreducible over $K$ and $k\geq 1$, we have $\deg f(X,b(X))=k\deg h\geq 1$. Therefore $\Delta _k=\frac{1}{k+1}(\deg f(X,b(X))-\deg f(X,a(X)))=\frac{k}{k+1}\deg h>0$. Our assumption that
\[
\deg a_{n-1}\geq \deg a_{n} \geq\max \{ \deg a_{0},\deg a_{1},\dots ,\deg a_{n-2}\} 
\]
shows that condition $\deg b(X)>\max \{ \deg a(X),\max \limits_{0\leq i\leq n-1 }
\frac{\deg a_{i}-\deg a_{n}}{n-i} \}+q_k$
reduces in this case to $\deg b(X)>\deg a_{n-1}-\deg a_n +q_k$. It remains to prove 
that $q_k=0$. To prove this, we notice that 
any divisor $d_{2}$ of $f(X,b(X))=h(X)^k$ is a power of $h$, as $h$ is irreducible, while any divisor $d_1$ of $f(X,a(X))$ is a constant, as $a_0\in K$. Thus, the least positive value of $\deg d_2-\deg d_1$ with $d_2\mid f(X,b(X))$ and $d_1\mid f(X,a(X))$ is $\deg h$, and since $\Delta _k=\frac{k}{k+1}\deg h<\deg h$, we conclude that $q_k=0$, which completes the proof.  
\end{proof}

We end this section with a result that requires knowing only the unitary divisors of $f(X,a(X))$ and $f(X,b(X))$, provided $f(X,a(X))$ and 
$\frac{\partial f}{\partial Y}(X,a(X))$ are relatively prime, and 
$f(X,b(X))$ and $\frac{\partial f}{\partial Y}(X,b(X))$ are also relatively prime. 
\begin{theorem}\label{thm7}
Let $K$ be a field, $f(X,Y)=a_{0}(X)+a_{1}(X)Y+\cdots +a_{n}(X)Y^{n}\in K[X,Y]$, 
with $a_{0},\dots ,a_{n}\in K[X]$, $a_{0}a_{n}\neq 0$, and $k$ a positive integer. Assume that for two polynomials $a(X),b(X)\in K[X]$ we have 
$f(X,a(X))f(X,b(X))\neq 0$ and 
$\Delta _k:=\frac{1}{k+1}\cdot (\deg f(X,b(X))-\deg f(X,a(X)))\geq 0$, and let
\[
q^{u}_k=\max \{ \deg d_{2}-\deg d_{1}\leq \Delta _k :d_{1}\in D_{u}(f(X,a(X))),d_{2}\in D_{u}(f(X,b(X))) \}.
\]
If $\gcd(f(X,a(X)), \frac{\partial f}{\partial Y}(X,a(X)))=1$,
$\gcd(f(X,b(X)), \frac{\partial f}{\partial Y}(X,b(X)))=1$ and 
\[
\deg b(X)>\max \left\{ \deg a(X),\max \limits_{0\leq i\leq n-1 }
\frac{\deg a_{i}-\deg a_{n}}{n-i} \right\}+q^{u}_k,
\]
then $f(X,Y)$ is the product of at most $k$ irreducible factors over $K(X)$.
\end{theorem}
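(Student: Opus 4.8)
The plan is to mimic the proof of Theorem \ref{thm5}, the only difference being that the hypothesis $\gcd(f(X,a(X)),\frac{\partial f}{\partial Y}(X,a(X)))=1$ and $\gcd(f(X,b(X)),\frac{\partial f}{\partial Y}(X,b(X)))=1$ forces the divisors arising from a putative factorization to be \emph{unitary} rather than merely admissible. First I would assume towards a contradiction that $f=f_1\cdots f_s$ with $s\geq k+1$ factors $f_i\in K[X,Y]$ irreducible over $K(X)$, so that in particular $q^u_k\geq q^u_{s-1}$ (since $q^u_k$ is decreasing in $k$, and $0$ is always a candidate). Fixing an index $i$ and writing $g_i(X,Y)=\prod_{j\neq i}f_j(X,Y)$, the product rule for $\frac{\partial f}{\partial Y}$ shows that $d_i(X):=f_i(X,a(X))$ divides $f(X,a(X))$ with $\gcd(d_i,f(X,a(X))/d_i)\mid\gcd(f(X,a(X)),\frac{\partial f}{\partial Y}(X,a(X)))=1$, so $d_i\in D_u(f(X,a(X)))$; likewise $d'_i(X):=f_i(X,b(X))\in D_u(f(X,b(X)))$. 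This is the one genuinely new point, and it is immediate once the coprimality hypothesis is invoked.

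From there the argument is verbatim that of Theorem \ref{thm5}. Since $\prod_{i=1}^{s}\frac{d'_i(X)}{d_i(X)}=\frac{f(X,b(X))}{f(X,a(X))}$, after introducing the non-archimedean absolute value $|A(X)|=\rho^{\deg A(X)}$ on $K(X)$ (for an arbitrary real $\rho>1$) and extending it to $\overline{K(X)}$, one of the factors, say $\frac{|d'_1(X)|}{|d_1(X)|}$, is at most $\sqrt[s]{|f(X,b(X))|/|f(X,a(X))|}$, which gives $\deg f_1(X,b(X))-\deg f_1(X,a(X))\leq q^u_{s-1}$, i.e. $\frac{|f_1(X,b(X))|}{|f_1(X,a(X))|}\leq\rho^{q^u_{s-1}}$. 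Writing $f(X,Y)=a_n(X)(Y-\xi_1)\cdots(Y-\xi_n)$ over $\overline{K(X)}$, the same elementary argument as in Theorem \ref{thm5} (comparing $|a_n|$ with $\max_{0\leq i\leq n-1}|a_i|\cdot|\xi|^{i-n}$ via the triangle inequality) shows $|\xi_i|\leq\rho^{\delta}$ with $\delta:=\max_{0\leq i\leq n-1}\frac{\deg a_i-\deg a_n}{n-i}$. Taking $f_1(X,Y)=b_m(Y-\xi_1)\cdots(Y-\xi_m)$ with $m\geq 1$, for some $j$ we get $\frac{|b(X)-\xi_j|}{|a(X)-\xi_j|}\leq\rho^{q^u_{s-1}/m}$, whereas the triangle inequality bounds the left side below by $\frac{\rho^{\deg b(X)}-\rho^{\delta}}{\rho^{\deg a(X)}+\rho^{\delta}}$.

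It then suffices to check that for $\rho$ sufficiently large one has $\frac{\rho^{\deg b(X)}-\rho^{\delta}}{\rho^{\deg a(X)}+\rho^{\delta}}>\rho^{q^u_{s-1}}\geq\rho^{q^u_{s-1}/m}$; the right-hand inequality holds for every $\rho>1$ because $q^u_{s-1}\geq 0$ and $m\geq 1$, while the left-hand one is equivalent to $\rho^{\deg b(X)}>\rho^{q^u_{s-1}+\deg a(X)}+\rho^{q^u_{s-1}+\delta}+\rho^{\delta}$, which holds for large $\rho$ because the hypothesis $\deg b(X)>\max\{\deg a(X),\delta\}+q^u_k$ together with $q^u_k\geq q^u_{s-1}$ makes the exponent on the left strictly larger than each exponent on the right. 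This contradicts the inequality $\frac{|b(X)-\xi_j|}{|a(X)-\xi_j|}\leq\rho^{q^u_{s-1}/m}$, so $s\leq k$ and $f$ is a product of at most $k$ irreducible factors over $K(X)$. There is no real obstacle: the only substantive step is the passage from admissible to unitary divisors, which is forced by the coprimality hypotheses exactly as in the proof of Theorem \ref{thm0unitary}; everything else is copied from Theorem \ref{thm5} with $q^u_k$, $q^u_{s-1}$ in place of $q_k$, $q_{s-1}$.
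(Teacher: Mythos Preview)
Your proposal is correct and follows exactly the approach taken in the paper: the paper's own proof simply observes that the coprimality hypotheses force each $d_i(X)$ and $d'_i(X)$ to be unitary divisors, notes that $q^u_k\geq 0$ since $1$ is always a unitary divisor, and then refers the reader back to the proof of Theorem~\ref{thm5} with $q^u_k$, $q^u_{s-1}$ replacing $q_k$, $q_{s-1}$. You have spelled out in full what the paper leaves implicit, but the argument is identical.
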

\begin{proof} \ Here, with the same notations as in the proof of 
Theorem \ref{thm5}, we see that $d_{i}(X)$ must belong to 
$\mathcal{D}_u(f(X,a(X)))$, while $d_i'(X)$ must belong 
to $\mathcal{D}_u(f(X,b(X)))$. We notice here that we will still have $q^{u}_{k}\geq 0$, since $1$ belongs to both 
$\mathcal{D}_u(f(X,a(X)))$ and $\mathcal{D}_u(f(X,b(X)))$. The rest of the proof 
is similar to that of Theorem \ref{thm5}, and will be omitted.
\end{proof}

\section{Examples} \label{se5}

{\bf 1)\ } To show that in some cases our results are sharp, we will first consider the following example. Let $p\geq 7$ be a prime number, and let $f(X)=p(p-1)X^3+X^2+(p-2)X+1$. We will apply Corollary \ref{coro2} by observing that $f(1)=p^2$, $a_0=1$, so $k=2$ and $1+\lfloor (k-1)\log _{\frac{p}{|a_0|}}p \rfloor =2$. The condition $|a_{n}|>2|a_{n-1}|+2^{2}|a_{n-2}|+\cdots +2^{n}|a_{0}|$ reduces in our case to the inequality $p(p-1)>2+4(p-2)+8$, that is to $p^2>5p+2$, which holds for primes $p\geq 7$. We may thus conclude that $f$ is the product of at most two irreducible factors over $\mathbb{Q}$. On the other hand, we notice that $f$ may be written as
$(pX^2-X+1)((p-1)X+1)$, so it has two irreducible factors.  

{\bf 2)\ } For another simple example where our results provide sharp estimates, this time with $q_2>1$, consider the polynomial $f(X)=35X^4+12X^2+1$, and let us pretend that we don't know how to factor it. One may check that $f$ has no rational roots. Without computing the roots of $f$, we are going to test the conditions in Theorem \ref{thm0Explicit} ii) with $a=1$ and $b=2$. We have $f(1)=2^4\cdot 3$ and $f(2)=3\cdot 7\cdot 29$, and one may deduce that $q_2=\frac{29}{16}=1.8125$. Instead of asking all the roots $\theta$ of $f$ to satisfy condition $|\theta -\frac{aq_2-b}{q_2-1}|<\sqrt{q_2}\frac{|b-a|}{q_2-1}$, it suffices to check that they satisfy $|\theta |<\sqrt{q_2}\frac{|b-a|}{q_2-1}-\frac{|aq_2-b|}{q_2-1}\approx 1.4262$. Since $35>12+1$, by Rouch\'e' s Theorem all the roots of $f$ have absolute values less than $1$, and we conclude by Theorem \ref{thm0Explicit} ii) that $f$ is the product of at most two irreducible factors over $\mathbb{Q}$.

{\bf 3)\ } Let us fix any arbitrarily chosen integers 
$a_{1},\dots ,a_{n-1}$ and $k\geq 0$. Then for all but finitely many prime numbers $p$ the polynomial
\[
f(X)=2p^{k}+a_{1}X+\cdots +a_{n-1}X^{n-1}+(p^{k+2}-2p^{k}-a_{1}-\dots -a_{n-1})X^{n}
\]
is the product of at most two irreducible factors over $\mathbb{Q}$. 

To prove this, observe that $f(0)=2p^{k}$ and $f(1)=p^{k+2}$, so we may apply 
Corollary \ref{coro1main} with $a=0$, $b=1$, $k_1=k$, $k_2=k+2$, $r=2$, and $1+\lfloor (k_2-k_1-1)\log _{\frac{p}{r}}p \rfloor =2$, since $1<\log _{\frac{p}{r}}p<2$ for $p\geq 5$. It remains to prove that the leading coefficient of $f$ satisfies the inequality
\[
|p^{k+2}-2p^{k}-a_{1}-\dots -a_{n-1}|>2^{n+1}p^{k}+\sum\limits_{i=1}^{n-1}2^{n-i}|a_{i}|,
\]
and this obviously holds for sufficiently large prime numbers $p$.

For an example where an explicit lower bound for $p$ can be easily obtained,
one can take $|a_{1}|=\dots =|a_{n-1}|=1$ and $k\geq 2$ to conclude that $f$  
is the product of at most two irreducible factors over $\mathbb{Q}$ for all primes $p\geq 2^{\frac{n+1}{2}}+1$.
Indeed, since $|a_n|\geq p^{k+2}-2p^{k}-n+1$ and $\sum_{i=0}^{n-1}2^{n-i}|a_i|=2^{n}-2+2^{n+1}p^{k}$, it suffices to ask $p$ to satisfy $p^2>2+2^{n+1}+\frac{2^n+n-3}{p^k}$,
which will obviously hold for $p\geq 2^{\frac{n+1}{2}}+1$.

\smallskip 

{\bf 4)\ } Let $p$ be a prime number and consider the polynomial $f(X,Y)\in \mathbb{Z}[X,Y]$ given by
\[
(pX+1)^2Y^4+(2p-2)(pX^2+X)Y^3+[(p-1)^2X^2+2pX+2]Y^2+(2p-2)XY+1.
\]
Notice that $f$ is written as a polynomial in $Y$ with coefficients 
in $\mathbb{Z}[X]$, namely $f=\sum _{i=0}^{4}a_{i}(X)Y^{i}$ with 
$a_{i}(X)\in \mathbb{Z}[X]$, and we have $\deg a_{4}=\deg a_{3}=\deg a_{2}=2$, $\deg a_{1}=1$, and $\deg a_{0}=0$, so condition 
\[
\deg a_{n-1}\geq \deg a_{n} \geq\max \{ \deg a_{0},\deg a_{1},\dots ,\deg a_{n-2}\} 
\]
in Corollary \ref{coro6} is satisfied. On the other hand, we observe that
\begin{eqnarray*}
	f(X,X) & = & p^2X^{6}+2p^2X^5+p^2X^4+2pX^3+2pX^2+1\\
	& = & (pX^3+pX^2+1)^2, 
\end{eqnarray*}
which is the square of an Eisensteinian polynomial with respect to the prime $p$. We may thus apply Corollary \ref{coro6} with $g(X)=X$, which satisfies the condition $\deg g>\deg a_{n-1}-\deg a_n$, and with $h(X)=pX^3+pX^2+1$, which is irreducible over $\mathbb{Q}$. We conclude that $f$ is the product of at most two irreducible factors over $\mathbb{Q}(X)$. Indeed, one may check that in fact we have $f(X,Y)=((pX+1)Y^2+(p-1)XY+1)^2$, so $(pX+1)Y^2+(p-1)XY+1$ must be irreducible over $\mathbb{Q}(X)$, which may also be tested directly, or again by Corollary \ref{coro6}.

\medskip

{\bf Acknowledgements} \ This work was done in the frame of the GDRI ECO-Math.


\begin{thebibliography}{99}  
\bibitem{Ballieu} R. Ballieu, \textit{Sur les limitations des racines d'une \'equation alg\'ebrique}, Acad. Roy. Belg. Bull. Cl. Sci. (5) 33 (1947), 747--750.

\bibitem{BMS} P. Batra, M. Mignotte, and D. \c Stef\u anescu, {\it Improvements of Lagrange's bound for polynomial roots}, J. Symbolic Comput. 82 (2017), 19--25.

\bibitem{BDN3} A. Bodin, P. D\`ebes, and S. Najib, {\it Prime and coprime values of polynomials.} Enseign. Math. (2) 66 (2020), 169--182.
 
\bibitem{Bonciocat1}  A.I. Bonciocat and N.C. Bonciocat, \textit{The irreducibility 
of polynomials that have one large coefficient and take a prime value}, Canad. Math. Bull. 52 (2009), no. 4, 511--520.

\bibitem{Apollonius} A.I. Bonciocat, N.C. Bonciocat, Y. Bugeaud, and M. Cipu, \textit{Apollonius circles and irreducibility criteria for polynomials}, 
Indag. Math 33 (2022), 421--439.

\bibitem{Bonciocat2} A.I. Bonciocat, N.C. Bonciocat, and A. Zaharescu, \textit{On the irreducibility of polynomials that take a prime power value}, 
Bull. Math. Soc. Sci. Math. Roumanie 54 (102) (2011), no. 1, 41--54.

\bibitem{BoncioActaArit} N.C. Bonciocat, \textit{Upper bounds for the number of factors for a class of polynomials with rational coefficients}, Acta Arith. 113 (2004), no. 2, 175--187.

\bibitem{Brillhart}  J. Brillhart, M. Filaseta, and A. Odlyzko, \textit{On an irreducibility theorem of A. Cohn}, Canad. J. Math. 33 (1981), no. 5,
1055--1059.

\bibitem{CAV}  M. Cavachi, {\it On a special case of Hilbert's irreducibility theorem}, J. Number Theory 82 (2000), no. 1, 96--99.

\bibitem{CVZ1}  M. Cavachi, M. V\^{a}j\^{a}itu, and A. Zaharescu, {\it A class of irreducible polynomials}, J. Ramanujan Math. Soc. 17 (2002), no. 3, 161--172.

\bibitem{CDF} M. Cole, S. Dunn,  and M. Filaseta, 
{\it Further irreducibility criteria for polynomials with non-negative coefficients}, 
Acta Arith. 175 (2016), no. 2, 137--181. 

\bibitem{Cowling1} V.F. Cowling and  W.J. Thron, \textit{Zero-free regions of polynomials}, 
Amer. Math. Monthly 61 (1954), 682--687.

\bibitem{Cowling2} V.F. Cowling and  W.J. Thron, \textit{Zero-free regions of polynomials}, 
J. Indian Math. Soc. (N.S.) 20 (1956), 307--310.

\bibitem{Dorwart}  H.L. Dorwart {\it Irreducibility of polynomials}, Amer. Math. Monthly 42 (1935), no. 6, 369--381.

\bibitem{Filaseta1}  M. Filaseta, \textit{A further generalization of an irreducibility theorem of A. Cohn}, Canad. J. Math. 34 (1982), no. 6,
1390--1395.

\bibitem{Filaseta2}  M. Filaseta, \textit{Irreducibility criteria for polynomials with non-negative coefficients}, Canad. J. Math. 40 (1988), no.
2, 339--351.

\bibitem{Fujiwara}  M. Fujiwara, \textit{\"{U}ber die obere Schranke des
absoluten Betrages der Wurzeln einer algebraischen Gleichung}, T\^{o}hoku
Math. J. 10 (1916), 167--171.

\bibitem{Girstmair}  K. Girstmair, \textit{On an Irreducibility Criterion of
M. Ram Murty}, Amer. Math. Monthly 112 (2005), no. 3, 269--270.

\bibitem{Guersenzvaig1} N.H. Guersenzvaig, \textit{Simple arithmetical criteria 
for irreducibility of polynomials with integer coefficients}, Integers 13 (2013), 1--21.

\bibitem{Kakeya} S. Kakeya, \textit{On the Limits of the Roots of an Algebraic Equation with
Positive Coefficients}, T\^ohoku Mathematical Journal (First Series), 2 (1912), 140--142.

\bibitem{Kojima} T. Kojima, \textit{On a theorem of Hadamard's and its application}, 
T\^{o}hoku Math. J. 5 (1914), 54--60.

\bibitem{Marden} M. Marden, \textit{Geometry of polynomials}, Mathematical 
Surveys and Monographs No. 3, American
Mathematical Society, Providence, RI, 1966.

\bibitem{MM3} M. Mignotte, {\it An inequality on the greatest roots of a polyomial}, 
Elem.  Math. 46 (1991), no. 3, 85--86.

\bibitem{Ore} O. Ore, {\it Einige Bemerkungen 
\"uber Irreduzibilit\"at}, Jahresbericht der Deutschen Mathematiker-Vereinigung 44 (1934), 147--151.

\bibitem{Perron} O. Perron, \textit{Algebra. II Theorie der algebraischen Gleichungen}, 
Walter de Gruyter \& Co., Berlin, 1951.

\bibitem{PolyaSzego}  G. P\'{o}lya and  G. Szeg\"{o}, \textit{Aufgaben und
Lehrs\"{a}tze aus der Analysis}, Springer-Verlag, Berlin, 1964.

\bibitem{RamMurty}  M. Ram Murty, \textit{Prime numbers and irreducible
polynomials}, Amer. Math. Monthly 109 (2002), no. 5, 452--458.

\bibitem{Stackel} P. St\"ackel, {\it Arithmetischen Eigenschaften ganzer Funktionen}, Journal f\"ur Mathematik 148 (1918), 101--112.

\bibitem{Weisner} L. Weisner, {\it Criteria for the irreducibility of polynomials}, Bull. Amer. Math. Soc. 40 (1934), 864--870.

\end{thebibliography}
\end{document}